\documentclass{article}
\usepackage{amsmath}
\usepackage{amssymb}
\usepackage{amsthm}
\usepackage{amsfonts}
\usepackage{diagbox}
\newtheorem{theorem}{Theorem}[section]
\newtheorem{proposition}[theorem]{Proposition}
\newtheorem{lemma}[theorem]{Lemma}
\newtheorem{corollary}[theorem]{Corollary}

\newtheorem{definition}[theorem]{Definition}

\newtheorem*{notation}{Notation}
\newtheorem*{remark}{Remark}
\newtheorem*{example}{Example}

\title{A universal linear algebraic model for conformal geometries}
\author{M\'at\'e L. Juh\'asz\\[2mm]
Alfr\'ed R\'enyi Institute of Mathematics\\
Hungarian Academy of Sciences\\
\texttt{juhasz.mate.lehel@renyi.mta.hu}\\
}
%\date{2016-09-08}

\DeclareMathOperator{\charop}{char}

\newcommand{\FSDet}{\cite{fillmore2000}}
\newcommand{\HLR}{\cite{hestens2000}}
\newcommand{\Kit}{\cite{kitaoka1993}}
\newcommand{\Mac}{\cite{macdonald2015}}
\newcommand{\Str}{\cite{struve2015}}
\newcommand{\Wil}{\cite{wildberger2009}}

\begin{document}

\maketitle

\begin{abstract}
This article describes an entirely algebraic construction for developing conformal geometries, which provide models for, among others, the Euclidean, spherical and hyperbolic geometries.
On one hand, their relationship is usually shown analytically, through a framework comparing the measurement of distances and angles in Cayley-Klein geometries, including Lorentzian geometries, as done by F. Bachmann and later R. Struve.
On the other hand, such a relationship may also be expressed in a purely linear algebraic manner, as explained by D. Hestens, H. Li and A. Rockwood.
\\
The model described in this article unifies these approaches via a generalization of Lie sphere geometry.
Like the work of N. Wildberger, it is a purely algebraic construction, and as such it works over
 any field of odd characteristic. It is shown that measurement of distances and angles is an inherent property of the model that is easy to identify, and the possible models are classified over the real, complex and finite fields, and partially in characteristic $2$, revealing a striking analogy between the real and finite geometries.
\end{abstract}

\section{Introduction}

The three classical plane geometries, spherical, Euclidean and hyperbolic, share deep connections between them. This connection can be formalized using linear algebra, as seen in \HLR{} and \Mac{}, where conformal geometric algebra is used.
Another way of approaching the similarities is using projective geometry and comparing how distances and angles are measured. As seen in \Str{}, we have three types of measurement for both distances and angles: spherical, Euclidean and hyperbolic. This gives us $9$ possible plane geometries. In the referenced article, a synthetic construction is given, based on the group of symmetries of the geometry, previously studied in \cite{bachmann1973}. This approach goes back as far as \cite{klein1926}, and it shows a natural duality between points and lines, something that is lost in the formulation found in \HLR{}. Similar methods are employed by Arkady Onishchik and Rolf Sulanke in \cite{onishchik2006}, in the development of Benz planes by Walter Benz (\cite{benz1973}), and the approach taken in the works of Emil Moln\'ar via polarity in projective geometry (\cite{molnar1976}).

These two approaches can be unified via a generalization of Lie's geometry of spheres (see \FSDet{}, \cite{fillmore1995}, \cite{fillmore1995planar}, \cite{lie1896}, \cite{rigby1981}), where lines and cycles are given orientation, and a quadratic hypersurface is constructed that contains all points, oriented lines and oriented cycles. This is a linear algebraic formulation, in the vein described in \HLR{}, but also more general, as the duality between points and lines is restored.

This article presents a formal framework that extends these previous works, and derives the measurment of distances and angles as a natural property of the group of isometries. This way distances and angles can be calculated for any field, without the need to use classical trigonometrical functions. The $9$ plane geometries in \Str{} are revealed in a natural way. Furthermore, similar enumerations may be done for all fields, presented here for the field of complex numbers and all finite fields of odd characteristic, resulting in a strikingly similar pattern of $9$ plane geometries for finite fields.

This construction can be developed without restricting the field of coordinates, and parts of the construction may even be used in the case when the characteristic is $2$. Among its properties, several of the classical models for these geometries, such as the Cayley-Klein and Poincar\'e models, may be derived directly, as seen in \HLR{} and \FSDet{}. The Poincar\'e model permits the construction of M\"obius planes, which over arbitrary fields become the Miquelian M\"obius planes (\cite{chen1970}).
Other attempts at describing conformal geometries over arbritrary fields include \Str{}, \Wil{} and \cite{wildberger2006}.
This article hopes to extend and incorporate all these constructions and shed some insight into these relationships through linear algebra.

\subsection{Overview}

The main definition that introduces the central object, the universal conformal geometry, is introduced in section 4. The main result of the article is announced in section 6, as a consequence of the theorem in subsection 5.3.

Section 1 and 2 give some overview of the main ideas behind the work. Before describing the universal formulation and how we can classify geometries, as a motivation I will describe in Section 2 the conformal unification of the Euclidean geometry with the curved geometries, and the process of ``quadratic lift''. Section 3 establishes notations and conventions in linear algebra.

Section 4 introduces the concept of universal conformal geometry, and gives several classical examples, as well as a few new ones. Section 5 presents some simple incidence properties, including a generalized form of the Poincar\'e and Cayley-Klein models, after which the measurement of distance is explained. Finally, section 6 classifies plane Cayley-Klein geometries in this general framework over the real numbers and finite fields of odd characteristic.

\section{Motivation}

An important distinction that is often made between homogeneous conformal geometries is whether they are modelled on an affine space, as is done with Euclidean geometry, or on a quadratic hypersurface inside a vector space (or projective space), as in the case of spherical or hyperbolic geometries. As it will turn out, this is an artifact of the construction rather than an inherent property of the geometry. I will rather describe this distinction in terms borrowed from differential geometry, referring to the former as a {\it flat geometry}, the later as a {\it curved geometry}.

Let us fix the field to be that of the real numbers. Consider these two definitions:

\begin{definition}
Let us call a {\bf curved geometry} of dimension $n$ a triple $(V,Q,r)$ consisting of a vector space $V$ of dimension $n+1$ over $\mathbb{R}$, a non-degenerate quadratic form $Q$ and $r$ a number. The points of the geometry are vectors $v\in V$ where $Q(v)=r$. The {\bf hyperplanes} of the geometry are linear subspaces of dimension $n$ of $V$, while {\bf hypercycles} are affine subspaces of dimension $n$. It is called a {\bf spherical space} when $Q$ is of signature $(n+1,0)$ and $r>0$, and a {\bf hyperbolic space} when $Q$ is of signature $(n,1)$ and $r<0$.
\end{definition}

\begin{definition}
Let us call a {\bf flat geometry} of dimension $n$ a couple $(A,Q)$ consisting of an affine space $A$ of dimension $n$ over $\mathbb{R}$ and a non-degenerate quadratic form $Q$. {\bf Hyperplanes} of the geometry are affine subspaces of dimension $n-1$, while {\bf hypercycles} are the zero sets of the function $p\to Q(\lambda p-c)+r$ where $c\in A$ and $\lambda$ and $r$ are numbers. It is called a {\bf Euclidean space} if $Q$ is of signature $(n,0)$.
\end{definition}

These geometries have a deep connection between them, and this can be formalized in an algebraic way, as done in \HLR{} using geometric algebra. To motivate the definitions in this article, I will show this connection using elementary algebra, similarly to how it is done in \cite{molnar1976}.

Note that in the terminology used throughout this article, hypercycles refer to higher dimensional cycles of all kinds, not specifically to the equidistant curves known from hyperbolic geometry.

\subsection{Flat and curved geometries}

Let us denote by $Q_F$ the quadratic form of a flat geometry and by $Q_C$ the quadratic form of a curved geometry.
In the curved case, all hypercycles are the zero sets of non-homogeneous linear functions, whereas in the flat case, they are not. In the flat case in general, they are of the form $Q_F(\lambda p-c)+r=\lambda^2 Q_F(p)-2\lambda B_F(p,c)+Q_F(c)+r$, where $B_F$ is the associated bilinear form defined as $B_F(u,v)=\frac{1}{2}(Q_F(u+v)-Q_F(u)-Q_F(v))$. Since $B_F$ is non-degenerate, hypercycles are in general of the form $AQ_F(p)+L(p)+R$ where $L$ is a linear form and $R$ and $A$ are constants. To make this into a linear function, we need a new coordinate that is always equal to $Q_F(p)$. We will denote it by $t$. Hence a flat geometry of dimension $n$ can also be described as an affine space $A$ of dimension $n+1$, with the points of the geometry being those $(p,t)\in A$ where $Q_F(p)=t$. These points form a quadric in $A$.

To unify this with the curved case, we may turn to homogeneous coordinates by introducing a new coordinate $z$, taking a projective space $P$ of dimension $n+1$, and introduce a new quadratic form $Q'_F((p,t,z))=Q_F(p)-tz$. Then the points of the geometry are those $q\in P$ where $Q'_F(q)=0$. In this new model, all hyperspaces and hypercycles of the geometry are hyperspaces of $P$, given as the zero sets of homogeneous linear functions.

In the curved cases, the vector space is naturally embedded inside a projective space $V\subset P$ through the introduction of a new coordinate $t$, and $V$ is the complement of the zero set of the function $t=0$. Then the points of the geometry are those $p=(v,t)\in P$ where $Q'_C(p)=Q_C(v)-t^2r=0$, giving us a quadratic form on $P$.
The hyperplanes and hypercycles are the zero sets of homogeneous linear functions.

The main difference between these models is the choice of the set of hyperplanes of the geometry among its hypercycles, which can be described in a simple way. In the curved case, hyperplanes of the geometry are those hyperplanes in $P$ that pass through the origin $\mathcal{O}$ of $V$. In the flat case, hyperplanes are those hypercycles where the expression $Q_F(p)$, or rather the $t$ coordinate of a point, does not appear in their functional form. This happens exactly if the point $(p,t,z)=(0,1,0)$, which we will denote by $\mathcal{O}$, is on the hypercycle.

With this description, a conformal geometry of dimension $n$ is a triple $(P,Q,\mathcal{O})$ consisting of a projective space $P$ of dimension $n+1$, a non-degenerate quadratic form $Q$, and an element $\mathcal{O}\in P$. The spherical, Euclidean and hyperbolic cases are united since $Q$ is of signature $(n,1)$ in all three cases, and the difference is in the norm of the element $\mathcal{O}$: negative in the spherical case, zero in the Euclidean case, and positive in the hyperbolic case. For the case $n=2$, this is summarized in the following table:
\begin{center}
\begin{tabular}{|c|c|c|c|}
\hline
&		Spherical&		Euclidean&		Hyperbolic\\
\hline
$Q$&		$x^2+y^2+z^2-t^2$&	$x^2+y^2-zt$&		$-x^2+y^2+z^2+t^2$\\
\hline
$\mathcal{O}$&	$(0,0,0,1)$&		$(0,0,0,1)$&		$(0,0,0,1)$\\
\hline
$Q(\mathcal{O})$&	$-1$&			$0$&			$1$\\
\hline
\end{tabular}
\end{center}

\subsection{Quadratic lift}

We will start with a simple example.
In two dimensional spherical geometry, there is a vector space $V$ of dimension $3$, and a quadratic form $Q(x,y,z)=x^2+y^2+z^2$. Any line of the geometry (i.e. great circle) is given by a vector $v\in V$ defined up to scalar multiple, and the points on the line are those $p$ where $Q(p)=1$ and $B(v,p)=0$. One would be tempted to define the angle of two such lines $v_1$ and $v_2$ by taking $\arccos B(v_1,v_2)$. But this is ill defined, since $v_1$ and $v_2$ may be multiplied by a constant, hence we need to normalize it:
$$\alpha=\arccos\frac{B(v_1,v_2)}{\sqrt{Q(v_1)Q(v_2)}}$$
To make this definition work, we must restrict the scalars by which we may multiply the the vectors $v_1$ and $v_2$ to be positive. We must also fix the convention of taking the positive square roots. In a more general setting, the square root might not even exist, and this definition is impossible to extend to arbitrary fields, where the sign of elements might not be defined. I will present an approach more in line with abstract linear algebra.

The problem here is that two lines have two different angles at their intersections: $\alpha$ and $\pi-\alpha$, corresponding to different signs chosen for the square root. To resolve such an ambiguity, we could fix the square root by adding another coordinate $w$ to vectors in $V$ whose value is one of $\pm\sqrt{Q(v_1)}$. For this condition to hold, we must extend $Q$ into $Q'(w,x,y,z)=x^2+y^2+z^2-w^2$, and assume that all lines correspond to vectors taken from this quadric. Also, the points are in a natural bijection with elements of the quadric with $w=0$. We get a new bilinear form $B'$, and the scalar product of a point and a line is the same for the new bilinear form as before, $B'(l,p)=B(l,p)$. Then the angle between two lines can be calculated by
$$\alpha=\arccos\left(\frac{B'(v_1,v_2)}{W(v_1)W(v_2)}+1\right)$$
where $W(w,x,y,z)=w$, a linear functional whose zero set are exactly the points.

If we apply this construction to the conformal Euclidean geometry, we get the Lie sphere geometry (\FSDet{}), where the points of the quadric are the points, oriented circles and oriented lines in the Euclidean geometry. By applying it to other geometries, we can extend the Lie sphere geometry to other curved geometries as well.
The main point of this construction is that it creates a duality between points and lines, as the set of either is the intersection of a quadric with some hyperplane.

Since this procedure, which assigns orientations to lines and circles, essentially gives solutions to a quadratic polynomial on the coordinates of the object (in this case, $w^2=Q(v_1)$), I call this the {\bf quadratic lift} of a space. It is essentially the inverse problem of restricting a quadratic form on $V$ to the quotient space $V/v$ for some vector $v\in V$.

It is useful to take a look at the inverse procedure: when $Q(v)\ne 0$, the space $V/v$ can be canonically identified with the subspace $v^\perp$ by orthogonally projecting all the representant vectors of an element in $V/v$ onto $v^\perp$. However, we may also consider the case when $Q(v)=0$. Then there is no canonical identification between $v^\perp$ and $V/v$, but we may take a non-canonical vector $\mathcal{O}$ such that $B(v,\mathcal{O})=-1$ and $Q(\mathcal{O})=0$, and identify $V/v$ with $\mathcal{O}^\perp$. Consider an affine space $A^v=\{x\in\mathcal{O}^\perp\mid B(v,x)=-1\}$. The underlying vector space $V^v\cong\langle v,\mathcal{O}\rangle^\perp$ has a non-degenerate quadratic form $Q^v$ on it, and so for any two vectors $u_1$, $u_2\in A^v$, their difference $u_1-u_2$ is in $V^v$, where it can be evaluated via $Q^v$.

Therefore, even though $V/v$ does not have a natural quadratic structure on it when $Q(v)=0$, taking the projectivization of the vector space $V/v$, denoted by $\mathbb{P}(V/v)$, and identifying $A^v$ by its affine subset, we get a pseudo-Euclidean geometry on it.
By introducing the coordinates $t$ for the coefficient of $v$, and $z$ for the coefficient of $\mathcal{O}$, then the quadratic form can be written as $Q(u)=Q^v(u^v)-tz$ for $u^v$ the orthogonal projection to $V^v$, giving essentially the conformal embedding of a flat geometry $(A^v,Q^v)$ into a vector space.

By applying the quadratic lift to the construction in the previous section, we would be able to identify oriented cycles as elements in a projective space, and define their angles without referencing the square root directly, exactly as in Lie sphere geometry. However, I will instead start at the most general setup of an $n+2$ dimensional projective space, and show what types of homogeneous geometries can be created.

\section{Preliminaries}

We will introduce several essential linear algebraic concepts. Some of them are not defined in the conventional way, such as degenerate quadratic forms.

Let us fix the field $\mathbb{K}$ and a vector space $V$ of finite dimension $n$ over it. When choosing a basis, we will start the indices with $1$. Our main focus shall be the finite fields and the fields $\mathbb{R}$ and $\mathbb{C}$. Often fields of characteristic $2$ behave differently from other fields, and we will have to distinguish them by specifying that $\charop\mathbb{K}\ne2$.

\begin{definition}
A {\bf quadratic form} $Q$ is a homogeneous quadratic polynomial in the coordinates on $V$, or equivalently, the combination of products of linear functions. The direct sum of two quadratic forms $Q_1$ and $Q_2$, defined over $V_1$ and $V_2$, respectively, is a quadratic form on $V_1\oplus V_2$, defined as $(Q_1\oplus Q_2)(v_1\oplus v_2)=Q_1(v_1)+Q_2(v_2)$.
\end{definition}

\begin{definition}
A {\bf bilinear form} $B$ is a polynomial on $V$ in two variables such that it is linear in both of them. Two vectors are {\bf orthogonal}, denoted as $u\perp v$, when $B(u,v)=0$. We denote by $S^\perp=\{u\in V\mid\forall v\in S: B(u,v)=0\}$ the {\bf orthogonal subspace} to the set $S\subseteq V$, and abbreviate $\{v\}^\perp$ by $v^\perp$.
\end{definition}

For every quadratic form $Q$, there is an {\bf associated symmetric bilinear} form $B(u,v)=Q(u+v)-Q(u)-Q(v)$. Conversely, every bilinear form $B$ gives rise to an {\bf associated quadratic form} $Q(v)=B(v,v)$. When $\charop\mathbb{K}\ne2$, these two operations are inverse to each other, up to a multiple of $2$.

\begin{definition}
Let us denote the function $u\to B(v,u)$ by $v^*$. A bilinear form is {\bf non-degenerate} or {\bf regular} if for every non-zero vector $v\in V$, the linear function $v^*$ is non-zero.
\end{definition}

This is equivalent to the following:

\begin{proposition}
Suppose that $\charop\mathbb{K}\ne 2$. Then a bilinear form is non-degenerate if and only if its associated quadratic form is not the direct sum of a quadratic form and the zero form $0(v)=0$.
\end{proposition}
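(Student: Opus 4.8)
The plan is to route the equivalence through the radical of the associated symmetric bilinear form, using the polarization identity in both directions. Throughout I take $B$ to be the symmetric bilinear form attached to $Q$ (so that $Q(v)=B(v,v)$, the two agreeing up to the factor of $2$ noted above); this symmetry is what makes the statement correct, since it forces the left and right radicals to coincide. Write $V^\perp=\{v\in V\mid v^*=0\}$ for the radical. By the definition of regularity, $B$ is non-degenerate exactly when $V^\perp=0$, so it is enough to show that $V^\perp\ne0$ if and only if $Q$ is a direct sum of some quadratic form with the zero form.

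For the forward direction I would assume $V^\perp\ne0$ and pick a complement $W$ with $V=W\oplus V^\perp$. First note that $Q$ vanishes on the radical: if $r\in V^\perp$ then $Q(r)=B(r,r)=0$ because $r^*=0$. Next, for $w\in W$ and $r\in V^\perp$ I expand $Q(w+r)=B(w+r,w+r)$ and drop the cross terms $B(w,r)=B(r,w)=0$, which vanish precisely because $r$ lies in the radical; what remains is $Q(w+r)=Q(w)$. Hence $Q=(Q|_W)\oplus 0_{V^\perp}$ with $V^\perp\ne0$, which is the desired decomposition.

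For the converse, suppose $V=V'\oplus U$ with $U\ne0$ and $Q(v'+u)=Q'(v')$ depending only on the $V'$-component. Fix a nonzero $u\in U$; the goal is to show $u\in V^\perp$. Using $\charop{\bb K}\ne2$ to recover $B$ from $Q$, the value $B(u,x)$ is a nonzero multiple of $Q(u+x)-Q(u)-Q(x)$. Writing $x=v'+u''$ with $v'\in V'$ and $u''\in U$, each of the three terms reads off only the $V'$-component of its argument, so they equal $Q'(v')$, $Q'(0)=0$, and $Q'(v')$ respectively, and cancel. Thus $B(u,x)=0$ for every $x$, giving $u^*=0$ and $u\in V^\perp$, so $B$ is degenerate.

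I expect no conceptual obstacle here; the only care needed is bookkeeping. Symmetry of $B$ is essential: without it a left-radical vector need not be a right-radical vector, the cross terms above need not vanish, and indeed the statement fails, as a non-degenerate alternating form has $Q\equiv0$, which is a direct sum with the zero form. The assumption $\charop{\bb K}\ne2$ enters only through the polarization step in the converse, and the accompanying factor of $2$ is irrelevant since I use only the vanishing of these expressions.
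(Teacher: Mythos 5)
Your proof is correct and takes essentially the same approach as the paper's: both directions identify degenerate (radical) vectors with zero-form direct summands via the polarization identity, the only cosmetic difference being that you split off the full radical $V^\perp$ at once where the paper works with a single degenerate vector $v$ and decomposes over $\langle v\rangle$ and a complement. One minor correction: $\charop{\bb K}\ne 2$ is also needed in your forward direction (to conclude $Q(r)=0$ for $r$ in the radical, since under the paper's convention $B(r,r)=2Q(r)$), not only in the polarization step of the converse as you claim.
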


\begin{proof}
If the quadratic form $Q$ is a direct sum $Q_1\oplus Q_2$ over the decomposition $V_1\oplus V_2$, and $Q_2=0$, then for any vector $v\in V_2$, $B(v,.)$ is the constant zero function. Now suppose that there is a vector $v$ such that $B(v,.)$ is the constant zero function. Then for any $u\not\in\langle v\rangle$, $Q(\lambda v+u)=\lambda^2 Q(v)+\lambda B(v,u)+Q(u)=Q(u)$, since $Q(v)=\frac{1}{2}B(v,v)=0$. Hence the quadratic form decomposes, and its component is zero over $\langle v\rangle$.
\end{proof}

To be able to discuss quadratic forms in characteristic $2$ as well, we will define a non-degenerate quadratic form this way, contrary to conventions.

\begin{definition}
A quadratic form $Q$ is {\bf non-degenerate} when there is no $Q'$ such that $Q=Q'\oplus 0$ where $0(v)=0$ is the zero quadratic form.
\end{definition}

The following nomenclature diverges slightly from that found in literature to accomodate the characteristic $2$ case.

\begin{definition}
A non-zero vector $v$ is called {\bf isotropic} if $Q(v)=0$, and anisotropic if $Q(v)\ne0$. It is called {\bf symplectic} if $B(v,v)=0$. It is a {\bf degenerate vector} if $B(v,u)=0$ for all $u\in V$.
\end{definition}

Note that when $\charop\mathbb{K}\ne 2$, isotropic and symplectic vectors are the same, while for $\charop\mathbb{K}=2$, every vector is symplectic.
For a non-degenerate quadratic form, there are no non-degenerate vectors if $\charop\mathbb{K}\ne 2$. When $\charop\mathbb{K}=2$, there might be degenerate vectors, such as the vector $(1,0,0)$ for the quadratic form $x^2+yz$. However, when $\mathbb{K}$ is finite, there can not be two linearly independent degenerate vectors. This is true for an even more general class of fields, called {\bf perfect fields}.

\begin{definition}
A field $\mathbb{K}$ of characteristic $2$ is {\bf perfect} if the polynomial $x^2-u$ for all $u\in\mathbb{K}$ has a root in $\mathbb{K}$.
\end{definition}

Perfect fields are essential to Galois theory for all positive characteristics, however the characteristic $2$ case is the only case we will need in this article. The following theorem from classical Galois theory is included without proof.

\begin{theorem}
Every finite field $\mathbb{K}$ of characteristic $2$ is perfect, and for all $u\in\mathbb{K}$ there is a unique element $u^{1/2}$ such that $(u^{1/2})^2=u$.
\end{theorem}

\begin{lemma}
For a non-degenerate quadratic form over a field $\mathbb{K}$, the degenerate vectors form a subspace. When $\charop\mathbb{K}\ne 2$, this subspace is of dimension $0$. When $\charop\mathbb{K}=2$ and $\mathbb{K}$ is perfect, it is a subspace of dimension at most $1$.
\end{lemma}

\begin{proof}
Consider the subspace of degenerate vectors, $U$. Then for any direct complement $U'$ of $U$, $V$ decomposes as $U\oplus U'$ and $Q$ as $Q|_U\oplus Q|_{U'}$. Clearly $Q|_U$ is non-degenerate. Since all elements of $U$ are orthogonal to each other, the quadratic form can be expressed as $\sum A_i x_i^2$. If $\mathbb{K}$ is perfect, then all the elements $A_i$ admit a unique square root $A_i^{1/2}$, and the form may be written as $\left(\sum A_i^{1/2}x_i\right)^2$, or in some other basis as $y_0^2$. If $\dim U>1$, this is clearly degenerate.
\end{proof}

Consider that since $B(v,u)$ is a linear function, the codimension of the subspace $v^\perp$ is always at most $1$, and it is $0$ only if $v$ is degenerate. Also, for any subspace $U\le V$, the codimension of $U^\perp$ is at most $\dim U$. This gives us the following lemma.

\begin{lemma}
A vector space $V$ contains no degenerate points if and only if $V^\perp=\{0\}$. Suppose that $V$ is such. Then a subspace $U\le V$ satisifies $U^\perp=\{0\}$ if and only if $U=V$.
\end{lemma}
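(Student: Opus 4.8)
The plan is to treat the two assertions separately: the first is read directly off the definitions, and the second follows from the codimension bound recorded in the paragraph immediately preceding the lemma, with the hypothesis $V^\perp=\{0\}$ used only in the trivial implication.

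For the first equivalence I would simply unwind the relevant definitions. By definition $V^\perp$ consists of those $u\in V$ with $B(u,v)=0$ for every $v\in V$; for a non-zero such $u$ this is exactly the condition that $u$ be a degenerate vector, while $0$ lies in $V^\perp$ automatically. Hence $V^\perp=\{0\}$ says precisely that no non-zero vector is degenerate, i.e.\ that $V$ contains no degenerate vectors. This direction is purely formal and requires no computation.

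For the second assertion, assume $V^\perp=\{0\}$ and let $U\le V$. If $U=V$ then $U^\perp=V^\perp=\{0\}$, which settles one implication at once (this is the only place the standing hypothesis enters). For the converse I would invoke the stated fact that the codimension of $U^\perp$ is at most $\dim U$. If $U^\perp=\{0\}$, then its codimension equals $\dim V$, so $\dim V\le\dim U$; since $U$ is a subspace of $V$ this forces $\dim U=\dim V$ and hence $U=V$.

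The only ingredient that is not a direct manipulation of definitions is the codimension inequality $\operatorname{codim}U^\perp\le\dim U$, and this is exactly the point at which a short argument would be needed were it not already asserted in the text preceding the lemma: one identifies $U^\perp$ with the kernel of the restriction map $V\to U^*$, $u\mapsto(v\mapsto B(u,v))|_U$, and applies rank--nullity together with $\dim U^*=\dim U$. Since that fact is available to me, I expect no genuine obstacle; the entire proof amounts to recording the two equivalences cleanly.
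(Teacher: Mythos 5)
Your proposal is correct and follows essentially the same route as the paper: identify $V^\perp$ with the set of degenerate vectors (together with $0$), and for the second claim apply the codimension bound $\operatorname{codim}U^\perp\le\dim U$ stated just before the lemma to conclude $\dim U=\dim V$, hence $U=V$. Your additions — making the trivial implication $U=V\Rightarrow U^\perp=\{0\}$ explicit and sketching the rank--nullity justification of the codimension bound — merely fill in details the paper leaves implicit, not a different argument.
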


\begin{proof}
In fact, $V^\perp$ is the set of degenerate points. Suppose that $V$ contains no degenerate points. The for any subspace $U$, the codimension of $U^\perp$ is at most $\dim U$. Therefore if $U^\perp=\{0\}$, then $\dim U=\dim V$, and so $U=V$.
\end{proof}

When $\charop\mathbb{K}\ne 2$, every quadratic form has an orthogonal basis, or equivalently, $Q(v)=\sum_{i=1}^na_iv_i^2$ for some $a_i\in\mathbb{K}$.

\begin{notation}
When the characteristic of the field is not $2$, and the quadratic form is of the form $\sum_{i=1} a_iv_i^2$, we will abbreviate this as $[a_1,\dots,a_n]$.
\end{notation}

Any orthogonal set of anisotropic vectors can be extended into an orthogonal basis. However, if we want to include isotropic vectors in a basis, or extend it to the characteristic $2$ case, we need a more general condition than orthogonality. Whenever there is an anisotropic vector $u$ such that $u^\perp\ne V$, we may take another vector $v$ such that $V$ decomposes into the direct sum of $V_1=\langle u,v\rangle$ and $V_2=V_1^\perp$. We may also assume that $Q(v)=0$ and $B(u,v)=1$.

\begin{definition}
A {\bf symplectic couple} will refer to a pair $u$, $v\in V$ of symplectic vectors such that $B(u,v)=1$. A {\bf generalized orthogonal set} $S$ is a set of linearly independent vectors such that $S$ contains several disjoint symplectic couples, and any pair of vectors that are not one of these symplectic couples are orthogonal. A {\bf generalized orthogonal basis} is a generalized orthogonal set that is also a basis.
\end{definition}

\begin{theorem}
Suppose that $V$ contains no degenerate points. Then any generalized orthogonal set may be embedded into a generalized orthogonal basis.
\end{theorem}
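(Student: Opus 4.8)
The plan is to induct on $\dim V$, at each step splitting off a non-degenerate orthogonal direct summand of dimension $1$ or $2$ that absorbs one or two members of (a mild enlargement of) the given set $S$, so that the remaining members lie in the orthogonal complement. Since the orthogonal complement of a non-degenerate summand is again non-degenerate, it contains no degenerate points, and the inductive hypothesis finishes the job. Throughout I would use that ``$V$ has no degenerate points'' is equivalent to $B$ being non-degenerate (the preceding lemma), so that $\dim U^\perp=\dim V-\dim U$ and $U^{\perp\perp}=U$ for every subspace $U\le V$.

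First I would dispose of the easy summands. If $S$ contains a non-symplectic vector $v$, it must be a singleton (both members of a symplectic couple are symplectic by definition), and $v$ is orthogonal to all of $S\setminus\{v\}$. Since $B(v,v)\ne 0$ we have $v\notin v^\perp$, whence $V=\langle v\rangle\oplus v^\perp$ is an orthogonal decomposition into non-degenerate pieces; the set $S\setminus\{v\}$ lies in $v^\perp$ and is a generalized orthogonal set there, so induction applies. Symmetrically, if $S$ contains a symplectic couple $(v,v')$, then $H=\langle v,v'\rangle$ carries the Gram matrix $\bigl(\begin{smallmatrix}0&1\\1&0\end{smallmatrix}\bigr)$, hence is non-degenerate, $V=H\oplus H^\perp$, and $S\setminus\{v,v'\}\subseteq H^\perp$; again induction applies.

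The only remaining possibility is that every element of $S$ is symplectic and none is half of a couple, i.e.\ $S$ consists of symplectic singletons. Here lies the real work: a symplectic singleton cannot survive into a generalized orthogonal basis, for there it would be orthogonal to every basis vector and hence to all of $V$, making it a degenerate vector, contrary to hypothesis. So I must manufacture a partner. Pick such a $v_1\in S$, put $W'=\langle S\setminus\{v_1\}\rangle$, and work inside $(W')^\perp$, which contains $v_1$ because $v_1\perp W'$. I claim the functional $u\mapsto B(v_1,u)$ on $(W')^\perp$ is nonzero: otherwise $v_1\in((W')^\perp)^\perp=W'$, contradicting the linear independence of $S$. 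Hence there is $u\in(W')^\perp$ with $B(v_1,u)=1$. When $\charop{\bb K}\ne 2$ I would replace $u$ by $v_1'=u-\tfrac12 B(u,u)\,v_1$, which still lies in $(W')^\perp$, still satisfies $B(v_1,v_1')=1$, and now has $B(v_1',v_1')=0$; when $\charop{\bb K}=2$ every vector is symplectic, so $v_1'=u$ already works. Then $(v_1,v_1')$ is a symplectic couple with $v_1'$ orthogonal to $S\setminus\{v_1\}$, so $S\cup\{v_1'\}$ is a generalized orthogonal set, and we are back in the couple case above, reducing $\dim V$ by $2$.

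The base of the induction is $\dim V=0$, where $S=\varnothing$ is itself the (empty) basis. To launch the recursion when $S=\varnothing$ but $\dim V>0$, note that non-degeneracy forces $B\not\equiv 0$, so either some vector is non-symplectic (feeding the line case) or, in characteristic $2$, some pair $u,w$ has $B(u,w)=1$ with both symplectic (feeding the hyperbolic-plane case); adjoining these gives a nonempty generalized orthogonal set to which the above applies. The main obstacle is precisely the partner construction for a symplectic singleton: one must meet the orthogonality, normalization, and symplectic conditions simultaneously, and the symplectic correction is exactly where the characteristic must be examined, the characteristic-$2$ case being saved by the fact that there every vector is already symplectic.
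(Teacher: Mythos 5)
Your proof is correct and follows essentially the same route as the paper's: induct, split off non-symplectic vectors and symplectic couples into non-degenerate orthogonal summands, and, when $S$ consists of pairwise orthogonal symplectic vectors, produce a partner $u$ in $(S\setminus\{v\})^\perp$ with $B(v,u)=1$ using non-degeneracy of $B$. You are in fact slightly more careful than the paper at one point: the paper uses the partner $u$ as found, whereas you correct it to $u-\tfrac{1}{2}B(u,u)\,v$ (trivial in characteristic $2$) so that it is genuinely symplectic, which the definition of a symplectic couple requires.
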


\begin{proof}
Consider such a set $S$. We may prove this theorem by induction on the size of $S$.

If $S$ is empty, we may choose any vector and add it to it. If the cardinality of $S$ is equal to $\dim V$, then we are done.

If there is a non-symplectic vector $v\in S$, and $S'=S\setminus\{v\}$, then $S'$ is an orthogonal set in $v^\perp$, satisfying the conditions of the theorem, and we may extend it by induction.
Similarly if there is a symplectic couple $v_1$, $v_2\in S$, and $S'=S\setminus\{v_1,v_2\}$, then $S'$ is an orthogonal set in $\langle v_1,v_2\rangle^\perp$.

Now let us suppose that $S$ contains only pairwise orthogonal symplectic vectors. Take one such vector $v\in S$ and consider $S'=S\setminus\{v\}$. Since these are all linearly independent vectors, $v^\perp$ does not contain the entirety of $S'^\perp$, which is of dimension $n-|S'|$. Hence there is a vector $u\in S'^\perp$ such that $B(u,v)=1$. Then the space $\langle u,v\rangle^\perp$ has no non-degenerate vectors, and $S'$ can be extended by induction.
\end{proof}

\begin{lemma}
If $\charop\mathbb{K}=2$ and $\mathbb{K}$ is perfect, then there are degenerate points for a non-degenerate quadratic form if and only if $2\nmid\dim V$, and in that case they form a subspace of dimension $1$.
\end{lemma}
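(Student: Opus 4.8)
The plan is to identify the degenerate points with the radical $V^\perp$ of the associated bilinear form $B$, and then to determine the parity of $\dim V^\perp$, which together with the bound from the earlier lemma pins it down exactly. Since a vector $v$ is degenerate precisely when $B(v,u)=0$ for all $u\in V$, the set of degenerate points is exactly the subspace $V^\perp$; and by the earlier lemma (applicable because ${\bb K}$ is perfect) its dimension is at most $1$. So the whole task reduces to deciding whether $\dim V^\perp$ is $0$ or $1$, and I will do this by showing $\dim V^\perp$ has the same parity as $\dim V$.

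First I would pick a direct complement $W$ of $U=V^\perp$ in $V$, so that $V=U\oplus W$. The point to verify here is that $W$, equipped with the restriction $Q|_W$, contains no degenerate points: if some $w\in W$ were orthogonal to all of $W$, then since every vector of $U$ already lies in the radical of $B$, the vector $w$ would be orthogonal to all of $V=U\oplus W$, forcing $w\in U\cap W=\{0\}$. Hence $W$ satisfies the hypothesis of the generalized orthogonal basis theorem, and I would apply that theorem (starting from the empty set) to obtain a generalized orthogonal basis of $W$.

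The crux is the parity count on this basis. In characteristic $2$ every vector is symplectic, so the basis can contain no genuine leftover vector sitting outside a symplectic couple: such a vector $v$ would be orthogonal to every other basis vector and, being symplectic, also to itself, hence orthogonal to all of $W$, contradicting the fact just established that $W$ has no degenerate points. Therefore the generalized orthogonal basis of $W$ is partitioned entirely into symplectic couples, and consequently $\dim W$ is even.

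Finally I would assemble the pieces: from $\dim V=\dim U+\dim W$ with $\dim W$ even, the radical dimension $\dim U$ has the same parity as $\dim V$. Combined with $\dim U\le 1$, this yields $\dim U=0$ when $\dim V$ is even and $\dim U=1$ when $\dim V$ is odd, so the degenerate points form a one-dimensional subspace exactly in the odd case. I expect the main obstacle to be the ``no leftover vector'' argument showing the generalized orthogonal basis of $W$ consists solely of symplectic couples; the remaining steps are routine bookkeeping on the decomposition $V=U\oplus W$ and on the dimension bound.
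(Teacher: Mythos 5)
Your proposal is correct and follows essentially the same route as the paper: both identify the degenerate points with the radical, take a direct complement, and use the generalized orthogonal basis theorem together with the fact that in characteristic $2$ every vector is symplectic to conclude that a space with no degenerate points must be even-dimensional, which forces the parity conclusion. Your write-up merely packages the paper's two-direction argument as a single parity count and fills in the details (the complement having no degenerate points, the ``no leftover vector'' step) that the paper leaves implicit.
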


\begin{proof}
First recall that the subspace of degenerate vectors may have dimension at most $1$.
Since in characteristic $2$ all vectors are symplectic, by the previous lemma the dimension of a vector space with no degenerate points can only be even, and so odd dimensional vector spaces must contain degenerate vectors, which form a subspace of dimension $1$.

On the other hand, by taking any direct complement of the subspace of degenerate vectors, the quadratic form on that subspace is not degenerate and it has no degenerate vectors. Hence it must be of even dimension. However, the dimension of the subspace of degenerate vectors is at most $1$, and so even dimensional vector spaces do not contain degenerate vectors.
\end{proof}

\begin{definition}
An {\bf isometry} $f$ between two spaces $(U,Q_U)$ and $(V,Q_V)$ is an injective linear map that is compatible with the quadratic forms: $Q_V(f(u))=Q_U(u)$. Two spaces with a bijective isometry between them are {\bf isometric}.
\end{definition}

Let us consider \Kit{}, {\bf Corollary 1.2.1.}:

\begin{theorem}
Given two subspaces $U$, $V\le W$ of the same dimension such that neither of them contains degenerate vectors, any isometry between $U$ and $V$ extends into an isometry of $W$.
\end{theorem}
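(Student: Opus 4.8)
This is the classical Witt extension theorem, so my plan is to reproduce its standard proof by reflections, arguing by induction on $\dim U=\dim V$. I will work under the assumption $\charop{\bb K}\ne 2$; the characteristic $2$ case genuinely needs the generalized orthogonal/symplectic machinery built up above (or a direct appeal to \Kit{}), and I flag it as the real obstacle at the end. The only ambient tool required is the reflection along an anisotropic vector $w$, namely $\tau_w(x)=x-\frac{B(x,w)}{Q(w)}w$; a one-line computation shows $Q(\tau_w(x))=Q(x)$, so $\tau_w$ is an isometry of all of $W$ fixing $w^\perp$ and sending $w\mapsto -w$. Crucially this needs only $Q(w)\ne 0$ and not non-degeneracy of the ambient $W$, which is convenient since the hypothesis only constrains $U$ and $V$.

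The base case is $\dim U=1$, say $U=\langle u\rangle$ and $V=\langle v\rangle$ with $Q(u)=Q(v)\ne 0$ (the values agree since the map is an isometry, and are nonzero because $U$ has no degenerate vector). I build an isometry of $W$ carrying $u$ to $v$. Since $\charop{\bb K}\ne 2$ gives $Q(u-v)+Q(u+v)=2Q(u)+2Q(v)=4Q(u)\ne 0$, at least one of $u-v$, $u+v$ is anisotropic. If $u-v$ is anisotropic, then using $B(u,u-v)=2Q(u)-B(u,v)=Q(u-v)$ one gets $\tau_{u-v}(u)=u-(u-v)=v$. If instead $u+v$ is anisotropic, the same computation gives $\tau_{u+v}(u)=-v$, and composing with $\tau_v$ (which sends $-v\mapsto v$) produces an isometry taking $u$ to $v$. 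Either way the line isometry extends to $W$.

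For the inductive step I first normalize. Since $U$ is non-degenerate it contains an anisotropic vector $u_1$: a nonzero totally isotropic subspace would have identically vanishing $Q$, hence vanishing $B$, hence only degenerate vectors. Put $v_1=\sigma(u_1)$; by the base case there is an isometry $\eta$ of $W$ with $\eta(v_1)=u_1$, and since extending $\eta\circ\sigma$ suffices, I may assume $\sigma(u_1)=u_1$. As $u_1$ is anisotropic, $B(u_1,u_1)=2Q(u_1)\ne 0$, so $u_1$ is non-degenerate in $W$ and $W=\langle u_1\rangle\oplus u_1^\perp$ is an orthogonal direct sum. Setting $U'=U\cap u_1^\perp$ and $V'=V\cap u_1^\perp$, these are non-degenerate subspaces of the smaller space $W'=u_1^\perp$ of dimension $\dim U-1$, and because $\sigma$ fixes $u_1$ and preserves $B$ it restricts to an isometry $U'\to V'$. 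The inductive hypothesis inside $W'$ gives an isometry $\phi$ of $W'$ extending $\sigma|_{U'}$, and then the map acting as the identity on $\langle u_1\rangle$ and as $\phi$ on $u_1^\perp$ is an isometry of $W$ extending $\sigma$.

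The main obstacle is entirely concentrated in the base case: the whole argument hinges on realizing a prescribed isometry of anisotropic lines by a reflection (or a product of two reflections), and that is precisely where $\charop{\bb K}\ne 2$ is indispensable. In characteristic $2$ every vector is symplectic, so $B(u_1,u_1)=0$ always, the splitting $W=\langle u_1\rangle\oplus u_1^\perp$ collapses, and both the reflection formula and the inductive reduction break down; there one must instead peel off symplectic couples and argue via the generalized orthogonal bases established earlier, or cite \Kit{} directly.
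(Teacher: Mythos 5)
The paper does not actually prove this statement: it is quoted verbatim as Corollary 1.2.1 of Kitaoka's \emph{Arithmetic of quadratic forms} and then used as a black box to derive the corollary that follows it. So there is no ``paper's approach'' to match; what you have supplied is the standard Witt extension argument that the citation points to, and it is correct under the paper's conventions. Note in particular that the paper defines $B(u,v)=Q(u+v)-Q(u)-Q(v)$ \emph{without} the factor $\tfrac12$, so $B(w,w)=2Q(w)$, and your reflection $\tau_w(x)=x-\frac{B(x,w)}{Q(w)}w$ is then exactly the right formula: it is an isometry of all of $W$, fixes $w^\perp$, and sends $w\mapsto -w$. The identity $B(u,u-v)=Q(u-v)$ when $Q(u)=Q(v)$ makes the base case work, the observation $Q(u-v)+Q(u+v)=4Q(u)\ne0$ guarantees an anisotropic choice among $u\pm v$, and the inductive step (normalize $\sigma(u_1)=u_1$, split off $\langle u_1\rangle$, recurse inside $u_1^\perp$) is sound; your remark that regularity is needed only for $U$ and $V$, not for the ambient $W$, is also correct and matches the theorem as stated.

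The one genuine limitation is the characteristic $2$ case, which you flag but do not handle. In this paper that restriction is not cosmetic: the entire surrounding apparatus (degenerate versus symplectic vectors, perfect fields) exists precisely so that this theorem and its corollary can be invoked when $\charop{\bb K}=2$ --- the corollary immediately following it explicitly treats perfect fields with $2\mid\dim V$, and the classification section applies this to finite fields of characteristic $2$. So on the paper's own terms, the characteristic $2$ half of the statement still rests on the citation to Kitaoka. That is an acceptable resolution --- it is exactly what the paper itself does for \emph{every} characteristic --- but you should be clear that what you have proved is the sub-case $\charop{\bb K}\ne2$, not the full statement as the paper uses it.
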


This gives the following result:

\begin{corollary}
If $Q$ is non-degenerate, then the orbits of non-degenerate vectors are exactly the non-degenerate vectors of the same norm: $\{v\in V\mid Q(v)=\alpha, v\not\in V^\perp\}$. Furthermore, for any isometric subspaces $U_1$, $U_2$ that contain no degenerate vectors, the subspaces $U_1^\perp$ and $U_2^\perp$ are also isometric.

If either $\charop\mathbb{K}\ne 2$ or the field is perfect and $2\mid\dim V$, there are no degenerate vectors, and the orbits are exactly the vectors of the same norm.
\end{corollary}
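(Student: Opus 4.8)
The plan is to derive all three assertions from the extension theorem just quoted (Kitaoka's Corollary 1.2.1) together with the two lemmas computing the size of the radical $V^\perp$. The one fact used throughout is that any isometry $f$ preserves not only $Q$ but also the associated bilinear form, since $B(f(u),f(w))=Q(f(u)+f(w))-Q(f(u))-Q(f(w))=Q(u+w)-Q(u)-Q(w)=B(u,w)$. Because the relevant isometries are the bijective self-maps of $V$, such an $f$ satisfies $f(V^\perp)=V^\perp$, hence it sends non-degenerate vectors to non-degenerate vectors of the same norm. This gives the easy inclusion: the orbit of a non-degenerate vector $v_0$ is contained in $\{v\in V\mid Q(v)=\alpha,\ v\notin V^\perp\}$ with $\alpha=Q(v_0)$.

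For the reverse inclusion I would fix non-degenerate $v_1,v_2$ with $Q(v_1)=Q(v_2)=\alpha$ and consider the line map $\sigma(\lambda v_1)=\lambda v_2$. It is a linear isomorphism $\langle v_1\rangle\to\langle v_2\rangle$, and it is an isometry because $Q(\lambda v_2)=\lambda^2\alpha=Q(\lambda v_1)$. The two lines have the same dimension, and since $V^\perp$ is a subspace and $v_i\notin V^\perp$, no nonzero multiple of $v_i$ lies in $V^\perp$; thus neither line contains a degenerate vector. The extension theorem then produces an isometry $f$ of $V$ restricting to $\sigma$, so $f(v_1)=v_2$ and the two vectors share an orbit. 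I expect the only delicate point to be the reading of the phrase ``contains no degenerate vectors'': a non-degenerate vector may well be isotropic (for instance when $\charop{\bb K}=2$, where every vector is symplectic), so the restriction of $Q$ to $\langle v_i\rangle$ can vanish identically. What matters is degeneracy \emph{with respect to the ambient form}, i.e.\ $\langle v_i\rangle\cap V^\perp=\{0\}$, which is exactly the hypothesis $v_i\notin V^\perp$. Checking that this is the correct interpretation so that the theorem applies is the main, and essentially the only, obstacle.

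The second assertion follows the same pattern: given a bijective isometry $\sigma\colon U_1\to U_2$ between subspaces containing no degenerate vectors, I would extend it to an isometry $f$ of $V$ by the theorem, noting that $U_1,U_2$ have equal dimension because they are isometric. Since $f$ preserves $B$ and is bijective, $w\perp U_1$ holds if and only if $f(w)\perp f(U_1)=U_2$, whence $f(U_1^\perp)=U_2^\perp$; the restriction of $f$ is then a bijective isometry $U_1^\perp\to U_2^\perp$, proving these orthogonal complements isometric.

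Finally, the last paragraph of the statement is immediate from the two dimension lemmas: when $\charop{\bb K}\ne2$ the radical $V^\perp$ is $\{0\}$, and when ${\bb K}$ is perfect of characteristic $2$ with $2\mid\dim V$ there are again no degenerate vectors. In either case $v\notin V^\perp$ for every nonzero $v$, so ``non-degenerate vector'' simply means ``nonzero vector'', and the orbit description of the first part specializes to the statement that the orbits are exactly the sets of vectors sharing a common norm.
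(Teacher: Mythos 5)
Your proof is correct and takes the same route the paper intends: the paper states this corollary without any proof, as an immediate consequence of the quoted extension theorem, and your argument (isometries preserve $B$ and hence $V^\perp$, then extend the line isometry $\lambda v_1\mapsto\lambda v_2$, then extend $\sigma\colon U_1\to U_2$ and restrict to the orthogonal complements) is exactly the intended derivation. The delicate point you flag is resolved correctly: by the paper's own definition a ``degenerate vector'' is one lying in the ambient radical $V^\perp$, so ``contains no degenerate vectors'' means $U\cap V^\perp=\{0\}$, which is what legitimizes applying the theorem to lines spanned by isotropic (but non-degenerate) vectors.
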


In \Kit{}, the following is referred to as a hyperbolic subspace. To avoid confusion with hyperbolic geometry, we will refer to it as a symplectic subspace.

\begin{definition}
A {\bf symplectic subspace} of dimension $2n$ is a vector space with a quadratic form that may be written as $\sum_i x_{2i}x_{2i+1}$ in some basis. In the case of $\charop\mathbb{K}\ne 2$, this is equivalently to the restriction of the quadratic form to the subspace being $[+1,\dots,+1,-1,\dots,-1]$ with the same amount of $+1$ as $-1$.
\end{definition}

\begin{lemma}
Given a symplectic space $V$ and a value $\lambda\in\mathbb{K}$, there is a vector $v\in V$ such that $Q(v)=\lambda$.
\end{lemma}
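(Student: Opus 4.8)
The plan is to reduce the problem to a single symplectic couple, where the claim becomes transparent. By the defining property of a symplectic subspace, we may choose a basis in which $Q$ takes the form $\sum_i x_{2i}x_{2i+1}$. Since the space is nontrivial, there is at least one such pair of basis vectors; call them $u$ and $w$, so that the restriction of $Q$ to $\langle u,w\rangle$ is $Q(\alpha u+\beta w)=\alpha\beta$ (equivalently, $u$ and $w$ form a symplectic couple with $B(u,w)=1$ and $Q(u)=Q(w)=0$).

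First I would set all coordinates outside this chosen pair to zero, reducing attention to the two-dimensional span $\langle u,w\rangle$. Then I would simply exhibit the vector $v=\lambda u+w$, for which $Q(v)=\lambda\cdot 1=\lambda$ follows directly from $Q(\alpha u+\beta w)=\alpha\beta$. A single such construction settles every target value $\lambda\in{\bb K}$ at once, and the same formula handles $\lambda=0$ as well (yielding the vector $w$, though one could also take $v=0$ in that case).

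There is essentially no obstacle here beyond unwinding the definition: the only thing to verify is that a symplectic space indeed contains at least one such pair, which is immediate as soon as the dimension $2n$ is positive. It is worth emphasizing that the argument is uniform across all fields and does not break when $\charop{\bb K}=2$, since it never divides by $2$ and depends only on the multiplicative pairing within a single couple. This is precisely why the symplectic form is the natural normal form to invoke once isotropic vectors must be accommodated, and it is the reason the lemma can be stated without any restriction on the characteristic.
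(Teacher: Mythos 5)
Your proof is correct and is essentially the paper's own argument: the paper likewise assumes the form $xy$ on a single symplectic couple and takes the vector with coordinates $(\lambda,1)$, which is exactly your $v=\lambda u+w$. Nothing further is needed.
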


\begin{proof}
Assume that the quadratic form is $xy$, and choose the coordinates $v=(\lambda,1)$.
\end{proof}

Let us denote the projectivization of $V$ by $\mathbb{P}V$.
When we consider the projective space $\mathbb{P}V$, it is still meaningful to talk about isotropic and anisotropic vectors, as well as orthogonal and non-orthogonal pairs of vectors.

\begin{notation}
The zero set of $Q$ in $\mathbb{P}V$ shall be denoted by $[[Q]]$.
\end{notation}

Also, even though we can not evaluate $Q$ on a point $p\in\mathbb{P}V$, the set of values it can take for some $p$ is restricted: when $[v]=p$ is replaced by $\lambda v$, we get the norm $Q(\lambda v)=\lambda^2 Q(v)$. Hence the value $Q(p)$ is identified up to multiplying by squares:

\begin{definition}
Given an element $p\in\mathbb{P}V$, its {\bf norm} $Q(p)$ is an element of the group $\mathbb{K}^\times/(\mathbb{K}^\times)^2\cup\{0\}$ (where $\mathbb{K}^\times$ is the multiplicative subgroup of $\mathbb{K}$), defined as the equivalence class that $Q(v)$ may take for $[v]=p$. Two points $p_1$ and $p_2$ are {\bf orthogonal} if their representants in $V$ are orthogonal.
\end{definition}

\begin{definition}
For a point $p\in\mathbb{P}V$, we will define $\mathbb{P}V/p$ as the projective space $\mathbb{P}(V/v)$ for some representant $v\in V$ for $p$.
\end{definition}

Finally, let us review a few important but simple properties of subspaces and quotient spaces.

\begin{lemma}
The non-degenerate bilinear form $B$ gives rise to a duality between the spaces $V/p$ and $p^\perp$. That is, there is a non-degenerate pairing $B:V/p\times p^\perp\to\mathbb{K}$.
\end{lemma}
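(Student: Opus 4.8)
The plan is to write down the only natural candidate for the pairing and verify it is well-defined, bilinear, and non-degenerate on both sides. Fix a representant $v\in V$ of $p$, so that $V/p=V/\langle v\rangle$ and $p^\perp=v^\perp$. For a class $[w]\in V/\langle v\rangle$ and a vector $u\in v^\perp$ I would set $\bar B([w],u)=B(w,u)$. The first thing to check is that this does not depend on the chosen representant $w$: replacing $w$ by $w+\lambda v$ changes the value by $\lambda B(v,u)$, which vanishes precisely because $u\in v^\perp$ and $B$ is symmetric. Bilinearity is then inherited directly from the bilinearity of $B$ together with the linearity of the quotient projection and of the inclusion $v^\perp\hookrightarrow V$, so no work is needed there.

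The substance of the lemma is the non-degeneracy of $\bar B$, which must be shown in both arguments. For the argument in $v^\perp$: if $u\in v^\perp$ satisfies $\bar B([w],u)=0$ for every class $[w]$, then $B(w,u)=0$ for all $w\in V$, i.e.\ $u\in V^\perp$; since $B$ is non-degenerate, $V$ contains no degenerate points, so $V^\perp=\{0\}$ by the earlier lemma and thus $u=0$. For the argument in $V/\langle v\rangle$: if $[w]$ satisfies $\bar B([w],u)=0$ for every $u\in v^\perp$, then $w\in(v^\perp)^\perp$, and I must conclude $w\in\langle v\rangle$, i.e.\ $[w]=0$. Hence the whole proof reduces to the single identity $(v^\perp)^\perp=\langle v\rangle$.

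This identity is where the only real care is required, and it is the step I expect to be the main obstacle. Since $B$ is non-degenerate, the map $\phi\colon V\to V^*$ sending $x$ to $B(x,\cdot)$ is injective, hence an isomorphism because $V$ is finite dimensional. Under $\phi$ the orthogonal complement $U^\perp$ is carried onto the annihilator of $U$ in $V^*$, which yields the exact dimension formula $\dim U^\perp=\dim V-\dim U$ for every subspace $U$. Applied to $U=\langle v\rangle$ this gives $\dim v^\perp=\dim V-1$ (here $v$ is not a degenerate vector, as $B$ is non-degenerate, so $v^\perp$ has codimension exactly one), and applied to $U=v^\perp$ it gives $\dim(v^\perp)^\perp=1$. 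As the inclusion $\langle v\rangle\subseteq(v^\perp)^\perp$ is immediate, equality of dimensions forces $(v^\perp)^\perp=\langle v\rangle$, closing that step.

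Finally I would note that the two non-degeneracy checks can be packaged more economically, which also makes the word ``duality'' explicit: the pairing induces a linear map $V/\langle v\rangle\to(v^\perp)^*$, and since $\dim(V/\langle v\rangle)=\dim V-1=\dim v^\perp$, injectivity of this map, which is exactly $(v^\perp)^\perp=\langle v\rangle$, already forces it to be an isomorphism. Thus $\bar B$ is automatically a perfect pairing and both radicals vanish. Either way the computational content is minimal, and the entire argument rests on the dimension formula for orthogonal complements under a non-degenerate form.
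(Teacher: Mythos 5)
Your proposal is correct, and it actually does strictly more than the paper's own proof. The paper's proof consists solely of your first step: it fixes representants, writes down the pairing $B([a],b)=B(a,b)$, and verifies well-definedness via $B(a+\lambda p,b)=B(a,b)+\lambda B(p,b)=B(a,b)$ --- then stops, leaving the non-degeneracy of the pairing (the actual content of the word ``duality'') implicit. You correctly identify that non-degeneracy in both arguments is the substance of the claim, and you supply it: non-degeneracy in the $p^\perp$ slot reduces to $V^\perp=\{0\}$, which the paper's earlier lemma provides, and non-degeneracy in the $V/p$ slot reduces to the identity $(v^\perp)^\perp=\langle v\rangle$, which you establish via the isomorphism $V\cong V^*$ induced by $B$ and the resulting dimension formula $\dim U^\perp=\dim V-\dim U$ for annihilators. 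Your closing observation --- that the pairing induces an injective map $V/\langle v\rangle\to(v^\perp)^*$ between spaces of equal dimension, hence an isomorphism --- is exactly the justification for the paper's unproved follow-up remark that $(p^\perp)^*\cong V/p$. So where the paper gives a two-line verification and relies on the reader to accept the duality claim, your argument closes that gap; the only cost is the longer write-up, since the dimension count, while standard, is genuinely needed and is not recorded elsewhere in the paper.
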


\begin{proof}
Take a pair of elements $[a]\in V/p$ as the image of $a\in V$, and $b\in p^\perp$. Since $[a]$ consists of elements $a+\lambda p$ for $\lambda\in\mathbb{K}$, and $B(a+\lambda p,b)=B(a,b)+\lambda B(p,b)=B(a,b)$, the bilinear form is well defined.
\end{proof}

This gives a natural isomorphism between the spaces $(p^\perp)^*\cong V/p$.

\begin{notation}
Let us denote the natural quadratic form on $p^\perp$ arising via $Q$ by $Q^p$.
\end{notation}

\begin{lemma}
When $B(p,p)\ne 0$, the spaces $V/p$ and $p^\perp$ may be identified via a projection through $p$.
\end{lemma}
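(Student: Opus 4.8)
The plan is to realize the identification as the orthogonal projection of $V$ onto $p^\perp$ in the direction of $p$, and to check that this projection descends to an isomorphism of $V/p$ with $p^\perp$. Concretely, I would fix a representant $v_0\in V$ of the point $p$, so that $B(v_0,v_0)\ne 0$ by hypothesis, and define the linear map $\pi\colon V\to V$ by
$$\pi(v)=v-\frac{B(v_0,v)}{B(v_0,v_0)}\,v_0.$$
The first step is to confirm that $\pi$ actually takes values in $p^\perp$, which is immediate from the one-line computation $B(v_0,\pi(v))=B(v_0,v)-B(v_0,v)=0$, and that the formula is unchanged if $v_0$ is rescaled, so that $\pi$ depends only on the point $p$ and not on the chosen representant.

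The next step is to compute the kernel and image. The formula shows at once that $\pi(v)=0$ forces $v$ to be a multiple of $v_0$, and conversely $\pi(v_0)=0$, so $\Ker\pi=\langle v_0\rangle$. Moreover $\pi$ restricts to the identity on $p^\perp$, since any $w\in p^\perp$ satisfies $B(v_0,w)=0$ and hence $\pi(w)=w$; in particular $\pi$ is onto $p^\perp$. Passing to the quotient, $\pi$ therefore induces a linear map $\bar\pi\colon V/p\to p^\perp$ that is injective, because its kernel is $\langle v_0\rangle/\langle v_0\rangle=\{0\}$, and surjective. This $\bar\pi$ is the identification asserted in the lemma.

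An equivalent and cleaner way to organize the same content is through the direct sum decomposition $V=\langle v_0\rangle\oplus p^\perp$: granting this, the composite $p^\perp\hookrightarrow V\twoheadrightarrow V/p$ is an isomorphism whose inverse is exactly $\bar\pi$, so the whole statement reduces to verifying the decomposition. This is the single place where the hypothesis $B(p,p)\ne 0$ is genuinely used, and thus the only real (if small) obstacle. The condition is needed twice: it gives $v_0\notin p^\perp$, so that $\langle v_0\rangle\cap p^\perp=\{0\}$, and it shows $v_0$ is not a degenerate vector, so that the functional $v_0^{*}$ is non-zero and $p^\perp=\Ker v_0^{*}$ has codimension exactly $1$. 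Combined with the dimension count $1+(n-1)=\dim V$, these force $V=\langle v_0\rangle\oplus p^\perp$ and complete the argument. I note that no restriction on the characteristic enters, since everything rests only on the invertibility of the scalar $B(v_0,v_0)$.
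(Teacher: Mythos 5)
Your proposal is correct and follows essentially the same route as the paper: the paper's proof is exactly the one-line projection $u\mapsto u-\frac{B(p,u)}{Q(p)}\,p$, and you have simply spelled out the details (well-definedness, kernel, surjectivity, descent to $V/p$) that the paper leaves implicit. If anything, your denominator $B(v_0,v_0)$ is the more careful choice, since with the paper's unnormalized convention $B(u,v)=Q(u+v)-Q(u)-Q(v)$ one has $B(p,p)=2Q(p)$, and dividing by $Q(p)$ as the paper does introduces a spurious factor of $2$.
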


\begin{proof}
For any vector $u\in V$, we may take $u-\frac{B(p,u)}{Q(p)}p\in p^\perp$, and this gives a map $\mathbb{P}V/p\to p^\perp$.
\end{proof}

\section{Universal conformal geometry}

\subsection{Definitions}

First we will introduce a few definitions, then look at some examples. The terminology resembles that found in \FSDet{}.
It is useful to compare these definitions with examples given in {\bf 4.2}.

Consider a vector space $V$ over $\mathbb{K}$ where $Q$ is a non-degenerate quadratic form on $V$. Recall that there is a corresponding bilinear form $B(u,v)=Q(u+v)-Q(u)-Q(v)$, and two vectors $u$, $v\in V$ are orthogonal if $B(u,v)=0$. Two points in $\mathbb{P}V$, the projectivization of $V$, are orthogonal if their representing vectors are orthogonal, which is well-defined. Also, the zero set of $Q$ in $\mathbb{P}V$ is well-defined.

\begin{definition}
A {\bf universal conformal geometry} of dimension $n$ is a tuple $(V,Q,P,L)$ where $V$ is a vector space of dimension $n+3$ over $\mathbb{K}$, $Q$ is a non-degenerate quadratic form with non-degenerate associated bilinear form $B$, and $P$, $L\in\mathbb{P}V$ are orthogonal.
\end{definition}

The orthogonality of $P$ and $L$ is important for certain essential calculations, such as the incidence structure of hyperplanes in the geometry, but some of the results hold even without this hypothesis. Note that $Q$ may be multiplied by any scalar without altering the isomorphism class of such a geometry.

\begin{definition}
The set $[[Q]]$ of points in $\mathbb{P}V$ where $Q$ is zero is called the {\bf Lie quadric}. The geometry is {\bf empty} if $[[Q]]$ is empty, and {\bf non-empty} otherwise.
\end{definition}

\begin{definition}
The {\bf dual geometry} is the same tuple with $P$ and $L$ exchanged.
\end{definition}

\begin{definition}
An {\bf (oriented) hypercycle} is an element $c\in\mathbb{P}V$ such that $Q(c)=0$. A hypercycle $c$ is a {\bf pointcycle} or {\bf point} if $B(P,c)=0$, and an {\bf (oriented) hyperplane} or {\bf hyperplanecycle} if $B(L,c)=0$. A hypercycle that is both a point and a hyperplane is {\bf ideal} (or {\bf ideal hyperplane}). When the dimension is two, a hyperplane is called a {\bf line}.
\end{definition}

\begin{definition}
Two hypercycles $c_1$, $c_2$ are {\bf touching} or {\bf incident} if $B(c_1,c_2)=0$. The geometry is {\bf non-degenerate} if there is a pair of an incident non-ideal point and non-ideal hyperplane. For any hypercycle, $[[c]]$ will denote the {\bf set of points} incident to the hypercycle, i.e. $[[Q]]\cap P^\perp\cap c^\perp=[[Q]]\cap\langle P,c\rangle$.
\end{definition}

Let us fix a representant of $L$ and $P$ in the vector space $V$. The following definition is a verbatim copy of the definition in \FSDet{}, and can be used in measuring distances and angles.

\begin{definition}
For two cycles $c_1$ and $c_2$, let the {\bf relative power} be $\frac{B(c_1,c_2)}{B(c_1,P)B(c_2,P)}$ and the {\bf inversive separation} be $\frac{B(c_1,c_2)}{B(c_1,L)B(c_2,L)}$.
\end{definition}

Since all the points appear in the subspace $P^\perp$, it is useful to consider the restriction of $Q$ to this space.
We will distinguish between subspaces of $P^\perp$ that are the points of a hypercycle, and those that are not, by calling the former {\it actual} hypercycles, while either is a {\it virtual} hypercycle.

\def\pointspace{%
\begin{definition}
The {\bf pointspace} is the orthogonal subspace $P^\perp\subset\mathbb{P}V$. The elements of the dual space $(P^\perp)^*\cong \mathbb{P}V/P$ are called {\bf unoriented virtual hypercycles}, with those that are the projection of an oriented hypercycle in $\mathbb{P}V$ to $\mathbb{P}V/P$ called an {\bf unoriented (actual) hypercycle}. The restriction of $Q$ to $P^\perp$ is $Q^P$, and two vectors $x_1$ and $x_2$ are {\bf incident} in the pointspace if $B^P(x_1,x_2)=0$.
\end{definition}

Note that $L$ is contained within the pointspace since $P\perp L$, and that $\mathbb{P}V/P\cong P^\perp$ in a natural way when $B(P,P)\ne0$.%
}

\pointspace

In general, any oriented hypercycle has an associated unoriented hypercycle, constructed as its projection to $\mathbb{P}V/P$, while the converse is not true. The set of points of two oriented hypercycles with identical associated unoriented hypercycles is the same. For two geometries differing only in which unoriented hypercycles emerge as projections of oriented hypercycles, I will refer to them as cycle equivalent. This can be formalized in a more compact manner.

\begin{definition}
Two geometries are {\bf cycle equivalent} if the tuples $(P^\perp, Q^P, L)$ are isomorphic.
\end{definition}

\subsection{Examples over the reals}

A non-degenerate quadratic form with signature of the form $(k,l)$ means that the vector space decomposes to two orthogonal subspaces of dimension $k$ and $l$ with $Q$ positive definite on the first and negative definite on the second.

Since $P$ and $L$ are orthogonal, they may be part of a generalized orthogonal basis. Vectors inside the $n+3$ dimensional vector spaces may be denoted by triplets ${\bf v}=(\overline v,v_{n+2},v_{n+3})$ with $\overline v$ in an $n+1$ dimensional vector space, or $(\overline v,v_{n+1},v_{n+2},v_{n+3})$ with $\overline v$ in an $n$ dimensional vector space.

In this section, we will use the conventional definition of the bilinear form as $B(u,v):=\frac{1}{2}(Q(u+v)-Q(u)-Q(v))$ unlike in the rest of this article where the scalar $\frac{1}{2}$ has been left off. This will give the inversive separation and relative powers here as double the previous definitions.

The following examples are given, without proofs, to illustrate the definitions in {\bf 4.1}, however they can be checked easily.

\medskip

$\bullet$ {\bf Elliptic geometry:} The $n$ dimensional elliptic geometry $\mathcal{E}^n$ consists of a quadratic form of signature $(n+1,2)$, with both $P$ and $L$ having a negative norm. When represented as $L=[e_{n+2}]$ and $P=[e_{n+3}]$, the points take coordinates $(\overline c,1,0)$, where $\overline c$ are its coordinates in the spherical model. A cycle of center $\overline c$ and radius $\rho$ has coordinates $(\overline c,\cos(\pm\rho),\sin(\pm\rho))$. Lines polar to a point $\overline c$ have coordinates $(\overline c,0,\pm1)$.

The inversive separation of two points of normalized distance $\delta$ is $\cos\delta-1$. The relative power of two cycles intersecting at angle $\vartheta$ is $\cos\vartheta-1$.

\medskip

$\bullet$ {\bf Hyperbolic geometry:} The $n$ dimensional hyperbolic geometry $\mathcal{H}^n$ consists of a quadratic form of signature $(n+1,2)$, with $P$ having negative and $L$ positive norm. When represented as $L=[e_{n+2}]$ and $P=[e_{n+3}]$, the points take coordinates $(\overline c,1,0)$, where $\overline c$ are its coordinates in the hyperboloid model, where conventionally one of the base vectors has a negative norm, for instance $e_{n+1}$. A cycle of center $\overline c$ and radius $\mathfrak{r}$ has coordinates $(\overline c,\cosh(\pm\mathfrak{r}),\sinh(\pm\mathfrak{r}))$. A hyperplane whose normalized vector in the hyperboloid model is $\overline l$ gives $(\overline l,0,1)$. Hypercycles at constant $\mathfrak{d}$ distance from a line whose normalized vector is $\overline l$ is $(\overline l,\sinh(\pm\mathfrak{d}),\cosh(\pm\mathfrak{d}))$. Paracycles centered on the ideal point $u$ are of the form $(u,\lambda,\lambda)$ for some parameter $\lambda$.

The inversive separation of two points of normalized distance $\mathfrak{d}$ is $1-\cosh\mathfrak{d}$. The relative power of two cycles intersecting at angle $\vartheta$ is $\cos\vartheta-1$.

\medskip

$\bullet$ {\bf Parabolic geometry:} The $n$ dimensional parabolic geometry $\mathcal{P}^n$ consists of a quadratic form of signature $(n+1,2)$, with $P$ having negative norm and $L$ being isotropic. Let us choose a non-orthogonal isotropic partner $O$ to $L$, that is also orthogonal to $P$. When represented as $L=[e_{n+1}]$, $O=[e_{n+2}]$, $P=[e_{n+3}]$, the points take coordinates $(\overline c,-Q(\overline c),1,0)$, with $Q$ being the restriction of $Q$ onto $\langle O,L,P\rangle^\perp$. Here, $\overline c$ gives the coordinates in an affine space and $O$ corresponds to the choice of origin. Then a cycle of center $\overline c$ and radius $r$ has coordinates $(\overline c,(\pm r)^2-Q(\overline c),1,\pm r)$. A hyperplane whose equation is $B(\overline c,\overline l)=d$ with $\overline l$ normalized, takes the coordinates $(\overline l,-2d,0,1)$.

The inversive separation of two points of normalized distance $d$ is $-\frac{1}{2}d^2$. The relative power of two cycles intersecting at angle $\vartheta$ is $\cos\vartheta-1$.

\medskip

$\bullet$ {\bf The Minkowski plane:} The $2$ dimensional Minkowski plane $\mathcal{M}^2$ consists of a quadratic form of signature $(3,2)$, with $P$ having positive norm and $L$ being isotropic. Let us choose a non-orthogonal isotropic partner $O$ to $L$, that is also orthogonal to $P$. When represented as $L=[e_3]$, $O=[e_4]$, $P=[e_5]$, the points take coordinates $(\overline c,-Q(\overline c),1,0)$, with $Q$ being the restriction of $Q$ onto $\langle O,L,P\rangle^\perp$. Here, $\overline c$ give the coordinates in an affine space and $O$ corresponds to the choice of origin. The cycles are certain hyperbol\ae{} that have coordinates $(\overline c,(\pm r)^2-Q(\overline c),1,(\pm r))$, while the lines are $(\overline l,-2d,0,1)$ where $\overline l$ is of norm $1$.

This only gives half of all the lines and origin-centered hyperbol\ae{} in the Minkowski plane. However, the other set of lines and hyperbol\ae{}, obtained by making the norm of $P$ negative, give a geometry that is isomorphic to the Minkowski plane.

\medskip

$\bullet$ {\bf The de Sitter surface:} The de Sitter surface is the dual Hyperbolic geometry $\mathcal{H}^*$, that has $P$ be positive and $L$ be negative. Although it is trivial to derive from the Hyperbolic geometry, this description will help for the anti-de Sitter surface. Taking $L=[e_4]$ and $P=[e_5]$, the points take coordinates $(\overline c,1,0)$, where $\overline c$ are the points in the hyperboloid model that have positive norm instead of the usual negative. Cycles then have the form $(\overline c,\sinh\mathfrak{r},\cosh\mathfrak{r})$ while lines $(\overline l,0,1)$ where $\overline c$ has negative norm and $\overline l$ has positive norm.

\medskip

$\bullet$ {\bf The anti-de Sitter surface:} It is similar to the de Sitter surface, but $L$ is positive as well. Then the points, cycles and lines have the same form as in the de Sitter case, but for cycles, $\overline c$ has positive norm, and for lines, $\overline l$ has negative norm.

\medskip

$\bullet$ {\bf The Laguerre/Galilei plane:} (\cite{yaglom1969}, \cite{yaglom1979}, \cite{yaglom1981}) Both $P$ and $L$ are isotropic. Since they are orthogonal, we have to choose an isotropic partner for each. Let $O$ be the partner of $L$ and $D$ that of $P$. Suppose that $P=[e_2]$, $O=[e_3]$, $L=[e_4]$ and $D=[e_5]$. Then the points take coordinates $(x,y,1,-x^2,0)$. Cycles are sets of the form $Ax+B+Cy-Dx^2=0$, which are parabol\ae{} whose axis of symmetry is parallel to $P$. Lines then take the form $(u,v,0,w,t)$ with $u^2-tv=0$, and contain the points where $2ux-yt-w=0$. The only lines that are not described this way are those where $t=0$ but $u\ne 0$, that is those that are parallel to $P$.

\medskip

\begin{remark}
In all of these models, every cycle may appear twice, with a positive radius and a negative radius. They are incident if they are tangent as subsurfaces, with compatible orientation.
\end{remark}

\subsection{Classification}

\begin{theorem}
Suppose that $\charop\mathbb{K}\ne 2$. A geometry is non-empty if and only if $Q(P)\ne 0$ and there is a subspace with the quadratic form $[Q(P),+1,-1]$, or $Q(P)=0$ and has a subspace with the quadratic form $[+1,+1,-1,-1]$.
\end{theorem}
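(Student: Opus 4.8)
The plan is to transfer the question from the full quadric to the quadratic form carried by the pointspace, and then to recognise the two normal forms by Witt cancellation (the corollary to the cited extension theorem). Since a point is precisely a hypercycle $c$ with $B(P,c)=0$, the points are the isotropic elements of $P^{\perp}$, that is the zeros of $Q^{P}$; so non-emptiness reduces to $Q^{P}$ being isotropic, with the proviso that when $Q(P)=0$ the vector $P$ itself is an (ideal) point lying in the radical of $Q^{P}$, so there one needs an isotropic vector transverse to $\langle P\rangle$. The norm $Q(P)$ therefore dictates the two cases. In each, the forward implication is the easy one, produced by exhibiting the subspace explicitly from a point; the real content is the converse.

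Case $Q(P)\neq 0$. Then $P$ is anisotropic, $V=\langle P\rangle\oplus P^{\perp}$ is an orthogonal decomposition and $Q\cong[Q(P)]\oplus Q^{P}$ with $Q^{P}$ non-degenerate. If $Q^{P}$ is isotropic it contains a symplectic plane $[+1,-1]$ inside $P^{\perp}$, and $\langle P\rangle$ together with it spans a subspace isometric to $[Q(P),+1,-1]$. Conversely, assume $V$ contains $W\cong[Q(P),+1,-1]=[Q(P)]\oplus[+1,-1]$. Comparing $Q\cong W\oplus W^{\perp}\cong[Q(P)]\oplus[+1,-1]\oplus W^{\perp}$ with $Q\cong[Q(P)]\oplus Q^{P}$ and cancelling the common anisotropic line $[Q(P)]$ gives $Q^{P}\cong[+1,-1]\oplus W^{\perp}$, which is isotropic.

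Case $Q(P)=0$. Using non-degeneracy of $B$, pick a symplectic partner $O$ with $B(P,O)=1$ and $Q(O)=0$, so that $\langle P,O\rangle\cong[+1,-1]$ and $Q\cong[+1,-1]\oplus Q'$ with $Q'$ non-degenerate on $V'=\langle P,O\rangle^{\perp}$ of dimension $n+1$. The radical of $Q^{P}$ is exactly $\langle P\rangle$ and $P^{\perp}/\langle P\rangle\cong V'$ carries $Q'$, so a point other than the ideal point $P$ exists iff $Q'$ is isotropic. If $Q'$ is isotropic, a symplectic plane in $V'$ together with $\langle P,O\rangle$ spans a subspace isometric to $[+1,+1,-1,-1]$. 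Conversely, if $V$ contains $[+1,+1,-1,-1]=[+1,-1]\oplus[+1,-1]$ then $Q$ has Witt index at least two, so $Q\cong[+1,-1]\oplus[+1,-1]\oplus K$ with $K$ anisotropic; cancelling the symplectic plane $\langle P,O\rangle$ against one summand leaves $Q'\cong[+1,-1]\oplus K$, again isotropic.

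The step I expect to be the main obstacle is the converse in each case, and it is exactly here that cancellation does the work: an abstract subspace of the prescribed isometry type need not meet $\langle P\rangle$, and only after cancelling the common anisotropic line (resp. common symplectic plane) is an isotropic direction forced into the pointspace. The one conceptual subtlety is the isotropic case $Q(P)=0$, where $P$ sits in the radical of $Q^{P}$ and is ideal; one symplectic plane is then already absorbed by the couple $\langle P,O\rangle$, which is precisely why the threshold rises from a single symplectic plane $[Q(P),+1,-1]$ to two, $[+1,+1,-1,-1]$. Throughout I use $\charop{\bb K}\neq 2$ (orthogonal bases and the absence of degenerate vectors in $V$) and the non-degeneracy of $B$ (to manufacture $O$ and to invoke cancellation).
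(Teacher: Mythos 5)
Your proposal is correct, and its forward implications coincide with the paper's: a zero of the non-degenerate form on $P^\perp$ (resp.\ on $\langle P,O\rangle^\perp$) spans a symplectic plane $[+1,-1]$, which together with $\langle P\rangle$ (resp.\ $\langle P,O\rangle$) gives the required subspace. The converse is where you take a different route. The paper picks, inside the given subspace, a vector $P'$ with $Q(P')=Q(P)$ and an isotropic $v\perp P'$, and uses transitivity of the isometry group (Witt extension, the first half of the paper's corollary to Kitaoka's theorem) to carry $P'$ to $P$; the image of $v$ is then a point. You instead use Witt cancellation --- which is precisely the second half of that same corollary: isometric subspaces free of degenerate vectors have isometric orthogonal complements --- to conclude that $P^\perp$ (resp.\ $\langle P,O\rangle^\perp$) itself contains a symplectic plane. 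The two mechanisms are equivalent faces of Witt's theorem, so the mathematical substance is the same; what your packaging buys is a cleaner statement of what is actually proved, namely that non-emptiness is equivalent to isotropy of $Q^P$ (resp.\ of $Q'$ on $\langle P,O\rangle^\perp$). In particular you make explicit a point the paper passes over: when $Q(P)=0$ the element $P$ is itself an ideal point sitting in the radical of $Q^P$, so the statement is only true if ``non-empty'' is read as the existence of a point transverse to $\langle P\rangle$ --- and this is also the reading the paper's own proof needs, since its $v$ must be linearly independent of $P$ for $\langle P,v\rangle$ to embed in a subspace of type $[+1,+1,-1,-1]$. One trivial slip on your side: after ``Witt index at least two'' your $K$ need not be anisotropic when the index exceeds two, but nothing in your argument uses anisotropy of $K$, so this is harmless.
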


\begin{proof}
If $Q(P)\ne 0$, the quadratic form restricted to the space $P^\perp$ is non-degenerate. Since it contains an isotropic vector, $P^\perp$ contains a subspace with the quadratic form $[+1,-1]$ that extends to $[Q(P),+1,-1]$.

If $Q(P)=0$, then $P$ and $v$ are orthogonal isotropic vectors, hence they can be embedded into a subspace with the quadratic form $[+1,+1,-1,-1]$.

In the other direction, suppose that we have a subspace of the prescribed form. Then there is a vector $P'$ with the same norm as $P$, and an element $v\perp P'$ with $Q(v)=0$. Since the isometry group is transitive, we can send $P'$ to $P$, and the image of $v$ is a point of the geometry.
\end{proof}

\begin{theorem}
A geometry is non-degenerate if and only if it contains a symplectic subspace of dimension $4$.
\end{theorem}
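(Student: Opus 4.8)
The plan is to translate the defining condition of non-degeneracy into explicit vector relations, deduce one direction by writing down an explicit symplectic $4$-space, and establish the converse by constructing the required incident pair with a count on isotropic vectors. First I would unwind the definition: a geometry is non-degenerate precisely when there exist $h,p\in V$ with $Q(h)=Q(p)=0$ and
\[
B(L,h)=0,\quad B(P,h)\ne 0,\quad B(P,p)=0,\quad B(L,p)\ne 0,\quad B(h,p)=0,
\]
the first two relations saying that $h$ is a non-ideal hyperplane, the next two that $p$ is a non-ideal point, and the last that they are incident; recall also $B(P,L)=0$. These relations force $P\ne L$ in ${\bb P}V$, since $B(P,p)=0\ne B(L,p)$ is impossible when $P$ and $L$ coincide, so I assume $P\ne L$ throughout.

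For the forward implication I would set $\Pi_1=\langle P,h\rangle$ and $\Pi_2=\langle L,p\rangle$. The relations above show that each of $P,h$ is orthogonal to each of $L,p$, so $\Pi_1\perp\Pi_2$; and in the basis $(P,h)$ the form on $\Pi_1$ has Gram matrix
\[
\begin{pmatrix} B(P,P) & B(P,h)\\ B(P,h) & 0\end{pmatrix},
\]
of determinant $-B(P,h)^2\ne 0$ (using $B(h,h)=2Q(h)=0$), and likewise for $\Pi_2$. Thus each $\Pi_i$ is a non-degenerate plane containing an isotropic vector, i.e. a symplectic plane; being orthogonal and non-degenerate they satisfy $\Pi_1\cap\Pi_2=0$, and $\Pi_1\oplus\Pi_2$ is the desired symplectic subspace of dimension $4$.

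The converse is the substantive direction. Assume $V$ contains a symplectic subspace of dimension $4$, that is, two orthogonal symplectic planes; I would build $h$ first and then $p$. Since $P\ne L$, the functional $B(P,\cdot)$ does not vanish identically on $L^\perp$ (otherwise $P\in(L^\perp)^\perp=\langle L\rangle$), so $\langle P,L\rangle^\perp=\ker\bigl(B(P,\cdot)|_{L^\perp}\bigr)$ is a proper hyperplane of $L^\perp$. If $L$ is anisotropic, $L^\perp$ is non-degenerate and still contains a symplectic plane, hence an isotropic vector; invoking the fact that a non-degenerate space possessing an isotropic vector is spanned by its isotropic vectors, these cannot all lie in the proper hyperplane $\langle P,L\rangle^\perp$, so some isotropic $h\in L^\perp$ has $B(P,h)\ne 0$. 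If $L$ is isotropic, $L^\perp$ is degenerate with radical $\langle L\rangle$, but $B(P,L)=0$ lets $B(P,\cdot)$ descend to the non-degenerate quotient $L^\perp/\langle L\rangle$, which again contains an isotropic vector; choosing there an isotropic class not annihilated by the induced functional and lifting it (the lift stays isotropic because $L$ is in the radical) yields the same $h$. Setting $\Pi_1=\langle P,h\rangle$, a symplectic plane inside $L^\perp$, the complement $\Pi_1^\perp$ is non-degenerate, contains $L$, and still contains a symplectic plane; running the identical argument inside $\Pi_1^\perp$ with the functional $B(L,\cdot)$ produces an isotropic $p\in\Pi_1^\perp=P^\perp\cap h^\perp$ with $B(L,p)\ne 0$. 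Then $p$ is a non-ideal point, $h$ a non-ideal hyperplane, and $B(p,h)=0$, so the geometry is non-degenerate.

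I expect the main obstacle to be exactly the spanning statement used twice in the converse: that a non-degenerate quadratic space containing an isotropic vector is spanned by its isotropic vectors, which is what forces the non-ideality conditions $B(P,h)\ne 0$ and $B(L,p)\ne 0$. The secondary difficulty is the bookkeeping that a symplectic plane survives in $L^\perp$ and then in $\Pi_1^\perp$, together with the separate isotropic-$L$ (or isotropic-$P$) case, where the relevant complement is degenerate and one must pass to the quotient by its radical before the spanning lemma applies.
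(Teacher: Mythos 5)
Your forward direction is exactly the paper's argument: split $\langle P,L,p,h\rangle$ into the orthogonal symplectic planes $\langle P,h\rangle\oplus\langle L,p\rangle$, so there is nothing to add there. Your converse takes a genuinely different route from the paper --- the paper constructs a model configuration $(P',L',p,l)$ with the correct norms \emph{inside} the given symplectic $4$-space $W$ and then transports $\langle P',L'\rangle$ onto $\langle P,L\rangle$ by one application of the Witt extension theorem (the Kitaoka result stated earlier in the paper), whereas you build $h$ and $p$ directly adapted to the actual $P,L$ via the spanning-by-isotropic-vectors lemma. That strategy can be made to work, but as written it has a genuine gap, and it is not where you think it is: the spanning lemma you flag as the ``main obstacle'' is standard and easy, while the two claims you dismiss as bookkeeping --- that $L^\perp$ and then $\Pi_1^\perp$ ``still contain a symplectic plane'' --- carry all the content, and they are of very unequal difficulty. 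The first one is indeed a dimension count: a totally isotropic plane $T\subset W$ satisfies $\dim\bigl(T\cap(W\cap L^\perp)\bigr)\ge 2+3-4>0$, so $L^\perp$ contains a nonzero isotropic vector (in your isotropic-$L$ case you must further check that it can be chosen outside $\langle L\rangle$, so that its class in $L^\perp/\langle L\rangle$ is nonzero; this is true, e.g.\ via a totally isotropic plane of $W$ through $L$ when $L\in W$, but it is an argument, not a remark).

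The claim about $\Pi_1^\perp$ cannot be obtained this way, and this is the real gap. Since $\Pi_1^\perp$ has codimension $2$, the intersection $W\cap\Pi_1^\perp$ can be $2$-dimensional and anisotropic, so no isotropic vector of $\Pi_1^\perp$ is visible inside $W$. Concretely, over ${\bb R}$ with $Q=[+1,+1,+1,-1,-1]$, take $W=\langle e_1,e_2,e_4,e_5\rangle$, $P=e_5$, $L=e_1$, $h=2e_3+e_4+\sqrt3\,e_5$: this is a legitimate instance of your construction, $\Pi_1=\langle P,h\rangle=\langle e_5,\,2e_3+e_4\rangle$ is a symplectic plane, yet $W\cap\Pi_1^\perp=\langle e_1,e_2\rangle$ is positive definite. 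What you actually need is that splitting off a symplectic plane lowers the Witt index by exactly one, i.e.\ Witt cancellation --- equivalently, one application of the extension theorem: extend an isometry carrying one symplectic plane of $W$ onto $\Pi_1$ to all of $V$; the image of the other plane of $W$ then lies in $\Pi_1^\perp$. That is precisely the theorem the paper's proof invokes openly, so your proposal does not avoid it; it only hides it inside a step labeled as bookkeeping. Once that tool is supplied (and the spanning lemma, which is fine), your construction of $h$ and then $p$ does go through, and it has the mild advantage of never having to produce representatives of prescribed norms; a point in your favor is the observation that non-degeneracy forces $P\ne L$, an edge case ($P=L$ isotropic) in which the theorem silently fails and which the paper's proof also ignores.
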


\begin{proof}
A non-degenerate geometry contains an incident pair of a point $p$ and a hyperplane $l$, both non-ideal. Since $Q(p)=Q(l)=0$ and $P\perp p$, $L\perp l$, the subspace $\langle P,L,p,l\rangle$ decomposes as two orthogonal subspaces $\langle P,l\rangle\oplus\langle L,p\rangle$. Since $p$ and $l$ are non-ideal, these two subspaces are symplectic. Therefore there is a symplectic subspace of dimension $4$, giving the desired form for the quadratic form.

Now if there is a symplectic subspace of dimension $4$, by decomposing it as the direct sum of two symplectic subspaces of dimension $2$ each, one can find orthogonal vectors $P'$ and $L'$ of norms $Q(P)$ and $Q(L)$ in it, and isotropic vectors $p$ and $l$ such that $P'\perp p\perp l\perp L'$. Since there are no degenerate vectors in the vector space, there is an isometry $f$ that maps $\langle P,L\rangle$ to the isomorphic subspace $\langle P',L'\rangle$, and $(f(p),f(l))$ gives an incident pair of a point and a hyperplane.
\end{proof}

In the following sections we will classify the possible geometries. It is enough to specify the equivalence class of a quadratic form (see \cite{kitaoka1993}) up to scalar multiple, and the norms of $P$ and $L$, since two different couples with $P'$ and $L'$ having the same norms admit an isometry that sends the subspace $\langle P,L\rangle$ into $\langle P',L'\rangle$. We will denote by $n$ the dimension of $V$.

\subsubsection{Classification for quadratically closed fields}

When $\mathbb{K}$ is quadratically closed (and $\charop\mathbb{K}\ne 2$), the quadratic form has a unique form, and it contains a symplectic subspace of maximal dimension ($n$ or $n-1$, whichever is even), hence the geometry is non-degenerate if $n\ge 4$. The norms of $P$ and $L$ are either $0$ or $1$ up to a square, hence in each dimension, we have $4$ geometries.

This includes the field of complex numbers, $\mathbb{C}$. Note that we are not studying Hermitian forms, as those are defined using conjugation, and so they are not quadratic forms. Since conjugation is an $\mathbb{R}$-linear operation, Hermitian forms might be better classified as geometries over the $\mathbb{R}$ algebra $\mathbb{C}$.

\subsubsection{Classification for reals}

For $\mathbb{K}=\mathbb{R}$, the quadratic form is determined by its signature. The forms of signature $(k,l)$ and $(l,k)$ give the same geometries, so let us suppose that $k\ge l$. The dimension of a maximal symplectic subspace is then $2l$, so the geometry is non-degenerate if and only if $l\ge 2$.

The norms of $P$ and $L$ are either $1$, $0$ or $-1$. Unless $k=l$ or $Q(P)=Q(L)=0$, the pair $(Q(P),Q(L))$ determines uniquely the geometry. However, when $k=l$ and either $Q(P)\ne 0$ or $Q(L)\ne 0$, we can flip the signs of $Q(P)$ and $Q(L)$.

In dimension $d=n-3$, not counting difference in sign, we can choose $l$ to be $2\le l\le d+1$, and $P$ and $L$ may be chosen in $9$ different ways, giving $9d/2$ geometries when $d$ is even, and $(9d-1)/2$ when $d$ is odd.

\subsubsection{Classification for finite fields}

Suppose that $\charop\mathbb{F}_q\ne 2$ is a finite field of $q$ elements, and choose a non-square ${\bf e}$. Every element of the field is either a square, or a square multiplied by ${\bf e}$. A quadratic form written as $\sum a_ix_i^2$ is determined by its determinant, $\det Q=\prod a_i$, up to square, and hence there are two non-isomorphic quadratic forms in each dimension (\Kit{}). A maximal symplectic subspace is always of dimension at least $n-1$ or $n-2$, whichever is even. Therefore the geometry is non-degenerate if and only if $n\ge 5$ or $n=4$ and the vector space is symplectic.

When the dimension of the geometry ($n-3$) is even, the two quadratic forms can be identified by multiplying one by ${\bf e}$, hence we may fix one of the two. Both the norms of $P$ and $L$ may be chosen as either $0$, $1$ or ${\bf e}$, giving $9$ geometries.
When the dimension is odd, the two quadratic forms are separate, and multiplying it by ${\bf e}$ does not change its isomorphism class. Only one of them is symplectic, so if $n=4$, we must choose that one, while if $n\ge 5$, we may choose either. Unless both $P$ and $L$ have norm $0$, multiplying $Q$ by ${\bf e}$ gives different values for $P$ and $L$, giving $5$ possibilities for the pair $(Q(P),Q(L))$. Therefore there are $10$ geometries if $n>5$, but only $5$ if $n=4$.

\subsubsection{Partial classification for finite fields in characteristic $2$}

Suppose that the field has characteristic $2$. In a perfect field every polynomial $x^2=a$ has a unique solution $a^{1/2}$. A quadratic polynomial of the form $x^2+Ax+B$ with $A\ne 0$ may always be written as $A^2((x/A)^2+(x/A)+B/A^2)$, and solving such a polynomial may be reduced to solving polynomials of the form $x^2+x+C$. The map $x\to x^2+x$ is additive, though not linear, and sends each pair $u$ and $u+1$ to the same value. Hence in a finite field, there is an element ${\bf e}$ such that every element in of the field is either of the form $\tau^2+\tau$ or ${\bf e}+\tau^2+\tau$.

Let us consider a finite field of characteristic $2$, which is a perfect field.
Here we will only consider geometries with non-degenerate bilinear forms, which only happens when the vector space is of even dimension, hence the geometry is of odd dimension. Every quadratic form may be written as $\sum (x_{2i}^2+x_{2i}x_{2i+1}+a_ix_{2i+1}^2)$. Every $a_i$ may be replaced by $a_i+\tau^2+\tau$ for some $\tau$ by an appropriate change of basis in $x_{2i}$ and $x_{2i+1}$, and the quadratic form is determined by $\sum a_i$ up to some term $\tau^2+\tau$ (this is the Arf invariant, see \Kit{}, {\bf Chapter 1.3}). A maximal symplectic subspace is always of dimension at least $n-2$, and so if $n\ge 6$, the geometry is non-degenerate.

Suppose that the dimension of the geometry ($n-3$) is at least $3$. There are two non-isomorphic quadratic forms. The norms of $P$ and $L$ can either be $0$ or $1$, giving us $4$ separate possibilities.

\section{Measurements}

\subsection{The pointspace}

Recall the definition of the pointspace:

\pointspace

\medskip

When $P$ is anisotropic and $\charop\mathbb{K}\ne 2$, any hypercycle can be uniquely projected onto $P^\perp$ by connecting them with a projective line in $\mathbb{P}V$ to $P$ and taking their intersection on $P^\perp$. There is a strong connection between incidence in the pointspace and the complete geometry.

\begin{definition}
For a vector $x\in P^\perp$, let us denote by $[[x]]^P$ the set $[[Q^P]]\cap x^\perp$.
\end{definition}

\begin{lemma}
Suppose that $B(P,P)\ne0$, and consider the restriction of $Q$ onto $P^\perp$, $Q^P$. Take a hypercycle $c$ in $\mathbb{P}V$ and consider $[[c]]=[[Q]]\cap P^\perp\cap c^\perp$. Then if $c^P$ is the projection of $c$ onto $P^\perp$, the set $[[c^P]]^P=[[Q^P]]\cap (c^P)^\perp$ is equivalent to $[[c]]$.
\end{lemma}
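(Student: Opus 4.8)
The plan is to reduce the asserted set equality to a one-line identity about the bilinear form. First I would record that, since $Q^P$ is by definition the restriction of $Q$ to $P^\perp$, its zero set satisfies $[[Q^P]]=[[Q]]\cap P^\perp$. Hence the two sets under comparison rewrite as $[[c]]=[[Q^P]]\cap c^\perp$ and $[[c^P]]^P=[[Q^P]]\cap(c^P)^\perp$. In the second expression the orthogonality is taken with respect to $B^P$, but for $x\in P^\perp$ the form $B^P$ is just the restriction of the ambient $B$, so $B^P(x,c^P)=0$ exactly when $B(x,c^P)=0$. Thus it suffices to prove that for every $x\in P^\perp$ one has $B(x,c)=0$ if and only if $B(x,c^P)=0$.

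For the key step, fix a representative $v\in V$ of $c$. Because $B(P,P)\neq0$ forces $Q(P)\neq0$ while $Q(c)=0$, the classes $c$ and $P$ are distinct in $\mathbb{P}V$, so the projection through $P$ is genuinely defined, with representative $\pi(v)=v-\mu P$ for the unique scalar $\mu=B(P,v)/B(P,P)$ placing $\pi(v)$ into $P^\perp$. For any $x\in P^\perp$ I would then compute $B(x,\pi(v))=B(x,v)-\mu\,B(x,P)=B(x,v)$, the last equality holding because $x\in P^\perp$ gives $B(x,P)=0$. This says precisely that the correction term $\mu P\in\langle P\rangle$ is invisible to pairing against the pointspace, yielding $B(x,c)=0$ if and only if $B(x,c^P)=0$.

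Combining the two observations, a point $x\in[[Q^P]]$ lies in $[[c]]$ exactly when it lies in $[[c^P]]^P$, which is the claimed equality. I do not anticipate a genuine obstacle: the whole content sits in the identity $B(x,\pi(v))=B(x,v)$ for $x\in P^\perp$, and the remaining work is bookkeeping, namely checking that $c^P$ is well defined (guaranteed by $c\neq P$) and that nothing is lost in passing between $B$ and its restriction $B^P$.
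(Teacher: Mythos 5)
Your proposal is correct and follows essentially the same route as the paper's proof: the paper likewise observes that $Q^P(p)=Q(p)$ for $p\perp P$ and that $B(p,c)=B(p,c^P)$ because $c^P$ differs from $c$ by a multiple of $P$, which is invisible when pairing against the pointspace. Your version merely makes explicit the well-definedness of the projection (via $Q(P)\ne0$, $Q(c)=0$) and the computation of the correction scalar $\mu=B(P,v)/B(P,P)$, which the paper leaves implicit.
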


\begin{proof}
This is a simple consequence of the fact that given a point $p\perp P$, we have $Q^P(p)=Q(p)$, and since $c^P$ is a linear combination of $c$ and $P$, $B(p,c)=B(p,c^P)$.
\end{proof}

Usually there are two oriented cycles for each unoriented cycle as their image, which in the model geometries correspond to two orientations of the same cycle.

\begin{definition}
The image of oriented hypercycles under the projection onto $\mathbb{P}V/P$ are (unoriented) {\bf (actual) hypercycles} and those of oriented hyperplanes are (unoriented) {\bf (actual) hyperplanes}.
\end{definition}

We will only consider unoriented hypercycles in this section, and we will use the convention that hypercycles are unoriented actual hypercycles, unless we call them virtual specifically.

In order to discuss cycles of fewer dimensions, we need the following definitions:

\begin{definition}
A {\bf virtual subcycle} $S$ of dimension $k$ is a subspace of dimension $k+1$ of the projective space $P^\perp$. It is an {\bf (actual) subcycle} $S$ if it is identified by a set of oriented hypercycles $c_1$, {\dots}, $c_{n-k}$ with $n$ being the dimension of the geometry, as the set $S=\langle P,c_1,{\dots},c_{n-k}\rangle^\perp$. When all the $c_i$ are hyperplanes, the subspace is called a {\bf subplane}. Its codimension is $n-k$. Subplanes of dimension $1$ are called {\bf lines}.
\end{definition}

Note that an alternative formulation for a subplane, considering $P\perp L$, is a subcycle that contains $L$.

\begin{remark}
In general, a distinction must be made between virtual subcycles and actual subcycles, as not all virtual subcycles are actual. In particular, which virtual subcycles admit a set of oriented hypercycles depends on the norm of $P$. However, geometries with isomorphic pointspaces have isomorphic virtual subcycles, and only the selection of actual subcycles changes by non-identical models. These models differ only in the norm of $P$, and are referred to as cycle equivalent.
\end{remark}

\begin{example}
Consider the Minkowski space over the reals, whose lines are generally divided into space-like and time-like lines, depending on the norm of their normal vectors. If the pointspace of some geometry is isomorphic to such a Minkowski space, only one of these two sets may be actual subcycles, as the projections of such cycles will have different signs for space-like and time-like vectors.
\end{example}

Virtual subcycles of codimension $1$ are in a bijection with virtual hypercycles $c\in \mathbb{P}V/P$. We will not distinguish between these two in terminology, unless needed. In particular, in dimension $2$, both a hyperplane $l$ and the subspace $\langle P,l\rangle$ are called a line.

\begin{lemma}
Consider a hypercycle $c$ and the restriction of the quadratic form to the generated subcycle $\langle P,c\rangle^\perp$. It is non-degenerate if and only if $B(P,c)\ne 0$. In particular, if $c$ is a hyperplane, it is non-degenerate if and only if $c$ is non-ideal.
\end{lemma}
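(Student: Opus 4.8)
The plan is to reduce the non-degeneracy of $Q$ restricted to $W:=\langle P,c\rangle^\perp$ to a statement about the small subspace $\langle P,c\rangle$, and to read the answer off from $B(P,c)$. Throughout I will use that the ambient bilinear form $B$ is non-degenerate (part of the definition of a universal conformal geometry), so that $V$ has no degenerate vectors and taking the orthogonal complement twice is the identity; in particular $W^\perp=\langle P,c\rangle^{\perp\perp}=\langle P,c\rangle$. I will also use the single computation $B(c,c)=Q(2c)-2Q(c)=2Q(c)=0$, valid in every characteristic since $c$ is a hypercycle.

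First I would treat the direction $B(P,c)\ne0\Rightarrow Q|_W$ non-degenerate by contradiction. If $Q|_W=Q'\oplus 0$ over a decomposition $W=W_1\oplus W_2$ with the zero form on $W_2$, then, exactly as in the proof of the earlier Proposition, any nonzero $v\in W_2$ is orthogonal to all of $W$: for $w=w_1+w_2$ one has $B(v,w_1)=0$ by the orthogonality of the direct sum, and $B(v,w_2)=Q(v+w_2)-Q(v)-Q(w_2)=0$ since $Q$ vanishes on $W_2$. Hence $v\in W\cap W^\perp=\langle P,c\rangle^\perp\cap\langle P,c\rangle$. But writing such a $v=\alpha P+\beta c$ and imposing $B(v,c)=\alpha B(P,c)=0$ forces $\alpha=0$, after which $B(v,P)=\beta B(P,c)=0$ forces $\beta=0$; so $v=0$, a contradiction. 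Equivalently, the Gram determinant of $B$ on $\langle P,c\rangle$ is $-B(P,c)^2\ne0$, so this subspace is $B$-non-degenerate and its radical $\langle P,c\rangle\cap\langle P,c\rangle^\perp$ is trivial.

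For the converse I would argue directly rather than by contradiction: assuming $B(P,c)=0$, I exhibit the required decomposition using $c$ itself. Since $B(c,P)=B(P,c)=0$ and $B(c,c)=0$, the vector $c$ lies in $W=\langle P,c\rangle^\perp$; and $c\in\langle P,c\rangle=W^\perp$, so $c$ is orthogonal to every vector of $W$. Choosing a complement $W=\langle c\rangle\oplus W'$ and expanding $Q(\lambda c+w')=\lambda^2 Q(c)+\lambda B(c,w')+Q(w')=Q(w')$ (using $Q(c)=0$ and $B(c,W')=0$) shows $Q|_W=0\oplus Q|_{W'}$, so $Q|_W$ is degenerate. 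Finally, the ``in particular'' clause is immediate: for a hyperplane $c$ one has $B(L,c)=0$ by definition, and being ideal means additionally $B(P,c)=0$, so non-ideal is exactly $B(P,c)\ne0$.

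The step I expect to need the most care is keeping the two notions of non-degeneracy straight in characteristic $2$, where a non-degenerate quadratic form need not have a non-degenerate associated bilinear form. The argument above is arranged to sidestep this: in the forward direction I only produce a vector in the radical of $B$ on $\langle P,c\rangle$ (no isotropy needed), while in the converse I use the isotropy $Q(c)=0$ of the hypercycle directly to build the $Q'\oplus 0$ splitting. Because hypercycles are isotropic by definition, these two facts bridge the quadratic and bilinear pictures exactly where needed, so no hypothesis on $\charop{\bb K}$ is required.
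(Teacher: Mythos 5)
Your proof is correct, and its skeleton matches the paper's: both reduce the question to the degeneracy of the span $\langle P,c\rangle$, detected by $B(P,c)$ through the computation $B(c,c)=2Q(c)=0$, and transfer it across the orthogonal complement using non-degeneracy of the ambient bilinear form. The difference lies in how the transfer is executed, and it is worth recording. The paper appeals in both directions to the general principle that a subspace is non-degenerate if and only if its orthogonal complement is, working entirely at the level of the bilinear form; read literally, this identifies bilinear non-degeneracy with the paper's direct-sum notion of non-degeneracy for quadratic forms, an equivalence the paper itself establishes only when $\charop{\bb K}\ne 2$. You instead unpack both directions into statements about the common radical $\langle P,c\rangle\cap\langle P,c\rangle^\perp$: in the forward direction a zero summand of $Q|_W$ produces a nonzero radical vector, which the Gram determinant $-B(P,c)^2\ne 0$ forbids; in the converse you exhibit $c$ itself as a radical vector and use its isotropy $Q(c)=0$ to split off a zero form explicitly, $Q|_W=0\oplus Q|_{W'}$ --- a direct construction where the paper argues by contradiction. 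What your version buys is precisely the point you flag at the end: the lemma, as a statement about the quadratic form, is proved verbatim in characteristic $2$, where a quadratically non-degenerate form can have a degenerate bilinear form and the paper's shortcut would need exactly your bridging steps to be airtight.
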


\begin{proof}
If $B(P,c)\ne 0$, then $\langle P,c\rangle$ is non-denegerate, hence its orthogonal subspace is non-degenerate as well. If $B(P,c)=0$, since $Q(c)=0$ as well, $\langle P,c\rangle$ is degenerate. Therefore if its orthogonal subspace were not degenerate, this would lead to a contradiction.
\end{proof}

This lemma gives a more general property for ideal hyperplanes:

\begin{definition}
A virtual subplane is {\bf quasi-ideal} if the quadratic form restricted to it is degenerate.
\end{definition}

\begin{remark}
It can be verified that in the Euclidean, spherical and hyperbolic models, quasi-ideal subplanes are those that contain no non-ideal points.
\end{remark}

\subsection{Incidence}

For the sake of completeness, we will review some essential information about the incidence structure of these geometries. In this section, every hypercycle is unoriented. We will refer to a set of elements of $\mathbb{P}V$ as {\bf independent} if the representant vectors in $V$ are linearly independent.

Any points $p$, $q\in[[Q^P]]$ in the pointspace that are collinear with $L$ are contained in exactly the same hyperplanes. This is a more general case of the antipodal points in spherical geometry, which motivates the following definition:

\begin{definition}
Any pair of points in the pointspace that are collinear with $L$ are called {\bf antipodal}.
\end{definition}

\begin{theorem}
Any $k$ independent points define a virtual subcycle of dimension $k-2$. Also, any $k$ independent points where no pair are antipodal define a virtual subplane of dimension $k-1$.
Therefore $n$ points with no antipodal pairs have at most one hyperplane incident to them all.
\end{theorem}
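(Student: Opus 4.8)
The plan is to treat all three assertions as statements about linear spans inside the pointspace $P^\perp$, exploiting that every point lies in $[[Q^P]]\subseteq P^\perp$, and then to pass freely between a subspace and its $B$-orthogonal complement using that $B$ is non-degenerate. Throughout I will use the bookkeeping that a virtual subcycle of dimension $d$ is a linear subspace of $P^\perp$ of linear dimension $d+2$ (projective dimension $d+1$), and that a virtual subplane is such a subspace that in addition contains $L$; these match the counts coming from the formula $S=\langle P,c_1,\dots,c_{n-k}\rangle^\perp$ in the definition.

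For the first assertion I would simply take the span $U=\langle p_1,\dots,p_k\rangle$. Since the $p_i$ are independent and lie in $P^\perp$, $U$ is a $k$-dimensional linear subspace of $P^\perp$, hence by the convention above a virtual subcycle of dimension $k-2$; this step is essentially definitional. For the second assertion I would adjoin $L$ and set $S=\langle p_1,\dots,p_k,L\rangle$. As $S$ contains $L$, the moment $S$ has linear dimension $k+1$ it is a virtual subplane of dimension $k-1$. Thus the entire content of the second assertion is the independence claim $L\notin U$. For a single pair this is exactly the definition of non-antipodality, namely $L\notin\langle p_i,p_j\rangle$, so the natural route is to promote the pairwise hypothesis to the full span, for instance by projecting $P^\perp$ from $L$ and examining the images $\bar p_i$, which are pairwise distinct precisely because no two points are antipodal. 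This is the step I expect to be the main obstacle: ruling out a relation $\sum\lambda_i p_i\in\langle L\rangle$ among independent points of $[[Q^P]]$ none of whose pairs is collinear with $L$. It is the only place the antipodal hypothesis enters, and for $k\ge 3$ I would scrutinise carefully whether pairwise non-antipodality genuinely propagates to the whole span, since a priori a higher-order dependence could slip past the pairwise condition; this is exactly where a general-position refinement of the hypothesis might be required.

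Granting the subplane statement, the final assertion follows by a dimension count with $k=n$. The $n$ points together with $L$ span a subplane $S$ of linear dimension $n+1$. A hyperplane $c$ incident to all the points must satisfy $Q(c)=0$, $B(L,c)=0$ and $B(p_i,c)=0$ for every $i$, that is, $c$ is an isotropic vector of $S^\perp$. Since $B$ is non-degenerate, $\dim S^\perp=(n+3)-(n+1)=2$, and crucially $P\in S^\perp$, because $P\perp p_i$ (the points lie in $P^\perp$) and $P\perp L$. Writing $S^\perp=\langle P,c_0\rangle$, the isotropic vectors form the zero set of a binary form and so there are at most two of them; but any two are congruent modulo $P$, hence project to a single element of ${\bb P}V/P$. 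As hypercycles are taken unoriented in this section, the two isotropic solutions represent one and the same hyperplane, giving at most one hyperplane through all $n$ points.

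Once the independence $L\notin U$ is secured, the remaining arguments are routine linear algebra. I would also verify the boundary behaviour when $Q(P)=0$: there $P$ itself is isotropic and lies on the Lie quadric, so one of the two isotropic vectors of $S^\perp$ may coincide with $P$, which is ideal rather than a genuine hyperplane; this removes a candidate and only strengthens the bound, so the conclusion is unaffected.
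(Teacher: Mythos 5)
Your decomposition is sound, and the two parts you actually prove are correct: the first assertion is definitional exactly as you say, and your finish for the third assertion (that $S^\perp$ is $2$-dimensional, contains $P$, and hence all of its isotropic vectors project to a single element of ${\bb P}V/P$) is a complete argument --- indeed more than the paper gives, since the paper's proof stops after the second assertion and leaves the ``therefore'' entirely implicit. But the step you flagged as the main obstacle is a genuine gap, and it cannot be closed: pairwise non-antipodality does \emph{not} propagate to the span once $k\ge 3$. Concretely, in the elliptic plane take $Q=[+1,+1,+1,-1,-1]$, $P=[e_5]$, $L=[e_4]$, and
$$p_1=[(1,0,0,1,0)],\qquad p_2=[(0,1,0,1,0)],\qquad p_3=[(-3,-4,0,5,0)].$$
These are independent isotropic points of the pointspace, no two of them are antipodal, and yet $3p_1+4p_2+p_3=(0,0,0,12,0)$, so $L\in\langle p_1,p_2,p_3\rangle$: geometrically, three pairwise non-antipodal points of one great circle. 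Together with $L$ they span a virtual subplane of dimension $1$, not $k-1=2$. Embedding the same configuration in ${\cal E}^3$ (with $Q=[+1,+1,+1,+1,-1,-1]$, $P=[e_6]$, $L=[e_5]$) refutes the third assertion for $n=3$: every vector $(0,0,\cos t,\sin t,0,1)$ is isotropic and orthogonal to $L$ and to all three points, and these project to pairwise distinct elements of ${\bb P}V/P$, so the three points are incident to infinitely many unoriented planes.

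So your suspicion that ``a general-position refinement of the hypothesis might be required'' is exactly right, and the fault lies with the theorem and the paper's own proof rather than with your argument: the paper asserts that ``$k$ independent points and $L$ define a virtual subcycle of dimension $k-1$'' without ever invoking the antipodal hypothesis, i.e.\ it silently assumes precisely the independence $L\notin\langle p_1,\dots,p_k\rangle$ that you isolated as the entire content of the second assertion. The statement as written is true only for $k\le 2$, hence in particular in the planar case $n=2$ that the rest of the paper actually uses (there, non-antipodality of the single pair is literally the needed independence). In general the hypothesis must be strengthened to: the $k+1$ vectors $p_1,\dots,p_k,L$ are independent --- equivalently, the span of the points contains no antipodal pair. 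Under that hypothesis your argument (and the paper's) goes through verbatim; under the stated hypothesis, no proof exists.
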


\begin{proof}
Any $k$ elements of the projective space $P^\perp$ give a subspace of dimension $k-1$, which defines a virtual subcycle of dimension $k-2$. Since $L\in P^\perp$, $k$ independent points and $L$ define a virtual subcycle of dimension $k-1$ containing $L$, hence a virtual subplane.
\end{proof}

\begin{theorem}
The intersection of $k$ independent virtual hyperplanes is a virtual subplane of dimension $n-k$. In particular, if $n=k$, this is of dimension $0$, and contains at most two antipodal points.
\end{theorem}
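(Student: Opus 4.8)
The plan is to realize the intersection of the $k$ virtual hyperplanes as a single orthogonal complement, and then read off its dimension and the fact that it contains $L$. Writing each virtual hyperplane as the subspace $\langle P,c_i\rangle^\perp = P^\perp\cap c_i^\perp$ of the pointspace, where $c_i$ is a representing hyperplane (so $Q(c_i)=0$ and $B(L,c_i)=0$), their intersection is
$$\bigcap_{i=1}^k\left(P^\perp\cap c_i^\perp\right)=\langle P,c_1,\dots,c_k\rangle^\perp.$$
Independence of the $k$ virtual hyperplanes is to be understood as independence of their poles $\bar c_1,\dots,\bar c_k$ in $V/P$, equivalently that $P,c_1,\dots,c_k$ span a subspace of dimension $k+1$ in $V$.

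First I would compute the dimension. Since $B$ is non-degenerate, $V$ contains no degenerate vectors, so by the results on orthogonal subspaces $\dim U^\perp=\dim V-\dim U$ for every subspace $U$. Applying this to $U=\langle P,c_1,\dots,c_k\rangle$ gives $\dim\langle P,c_1,\dots,c_k\rangle^\perp=(n+3)-(k+1)=n-k+2$, which as a projective subspace of the pointspace $\mathbb{P}(P^\perp)$ has dimension $n-k+1$, i.e. a virtual subcycle of dimension $n-k$. Next, because each $c_i$ is a hyperplane we have $B(L,c_i)=0$, and $B(L,P)=0$ by the orthogonality of $P$ and $L$; hence $L\in\langle P,c_1,\dots,c_k\rangle^\perp$. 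A subcycle containing $L$ is a subplane, which settles the first assertion.

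For the case $n=k$ the subplane has dimension $0$, so the underlying subspace $\ell=\langle P,c_1,\dots,c_n\rangle^\perp$ is two-dimensional, i.e. a projective line in the pointspace, and it contains $L$. The points of the geometry lying on it are the isotropic points of $\ell$, that is the projective zeros of $Q$ restricted to $\ell$. Writing $\ell=\langle L,u\rangle$, this restriction is the binary quadratic form $(a,b)\mapsto a^2 Q(L)+ab\,B(L,u)+b^2 Q(u)$, which has at most two projective zeros. Since every point of $\ell$ is collinear with $L$ (as $L\in\ell$), any two such points are antipodal; hence $\ell$ carries at most two points of the geometry, and they are antipodal.

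The step I expect to require the most care is the last one: a binary quadratic form has at most two projective zeros only when it is not identically zero, and the exceptional case $Q|_\ell\equiv 0$ occurs precisely when $\ell$ is totally isotropic, a quasi-ideal line on which every point is a mutually antipodal point of the geometry. I would handle this by noting that $L\in\ell$, so whenever $L$ is anisotropic the form is automatically non-trivial; the genuinely degenerate case can only arise when $L$ is isotropic and must be singled out as the quasi-ideal exception. I would also confirm that independence of the virtual hyperplanes is the notion used above, namely independence of the poles in $V/P$ so that $P$ is genuinely independent from the $c_i$; otherwise $P$ could lie in $\langle c_1,\dots,c_k\rangle$ and the dimension count would be an inequality rather than an equality.
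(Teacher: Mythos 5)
Your argument is correct and follows the same route as the paper: the first claim is the orthogonal-complement dimension count (which the paper dismisses as ``a tautological consequence of the dimensions of the subspaces''), and the second claim comes from intersecting a projective line through $L$ with the quadric $[[Q]]$.

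The caveat you flag at the end is not mere caution --- it exposes a genuine gap in the paper's own proof. The paper asserts that a line ``intersects $[[Q]]$, a quadratic surface, in at most $2$ points,'' which is false when the line is contained in the quadric, and such lines do arise as intersections of independent virtual hyperplanes. Concretely, in the Laguerre/Galilei plane one has $Q(P)=Q(L)=0$ and $B(P,L)=0$, so $\langle P,L\rangle$ is a line of the pointspace on which $Q$ vanishes identically; it is the intersection of two independent virtual hyperplanes, and every one of its points lies on $[[Q]]$, giving far more than two (pairwise antipodal) points. So the statement as written needs exactly the quasi-ideal exclusion you propose, or else ``points'' must be read as non-ideal points: since $Q|_\ell\equiv 0$ forces $B|_\ell\equiv 0$ in characteristic $\ne 2$, every point of such a line is orthogonal to $L$, hence ideal, and the corrected statement then holds. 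One minor inaccuracy in your setup: you take representatives $c_i$ with $Q(c_i)=0$ and $B(L,c_i)=0$, but a \emph{virtual} hyperplane need not admit an isotropic representative --- it is just a codimension-one subspace of the pointspace containing $L$; fortunately your argument never uses $Q(c_i)=0$, only $B(L,c_i)=0$ and the independence of $P,c_1,\dots,c_k$, so nothing breaks.
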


\begin{proof}
The first part is a tautological consequence of the dimensions of the subspaces. Any subplane of dimension $0$ is by definition a subspace of $\mathbb{P}V$ of dimension $1$ containing $L$. Such a subspace intersects $[[Q]]$, a quadratic surface, in at most $2$ points, and if they are elements of the pointspace, they must be antipodal, since the subspace contains $L$.
\end{proof}

In the spherical case, when we identify points on the same line passing through the origin, we get the elliptical geometry, where two lines have a single intersection. In the general case, we may identify points on lines passing through $L$ (i.e. $P^\perp/L$). Finally, for the sake of completeness, we may consider the following generalizations of the Poincar\'e and Cayley-Klein models, defined as an extension to the definitions in \FSDet{} (also studied in \HLR{}).

\begin{definition}
The {\bf Poincar\'e/inversive model} is the space $P^\perp$ with $Q$, with the points being the zero set of $Q$ restricted to $P^\perp$. The elements of $P^\perp$ are the cycles.
The {\bf Cayley-Klein/projective model} of the geometry is the projective space $P^\perp/L$. This identifies the points on the subplanes of dimension $1$.
\end{definition}

\begin{remark}
Although these models are interesting in relationship to the geometry, inversive models can also be studied on their own, as projective spaces with a quadratic form defined on them. They have a natural M\"obius structure on them, with the points corresponding to points of norm zero, and cycles as the intersections of subspaces of the projective space with the set of points. When the projective space is of dimension $3$, choosing a non-degenerate quadratic form gives us a {\bf Miquelian plane}. See \cite{chen1970}.
\end{remark}

\subsection{Distance}

Distance is the set of orbits of pairs of points under the action of isometries. As such, it is enough to consider the pointspace $P^\perp$ when defining distance. Furthermore, distance should be an additive property, in the sense that if given three points $A$, $B$, $C$ on a line, $d(A,C)$ is either the sum or difference of $d(A,B)$ and $d(B,C)$. Hence we will first define distance on a fixed line.

Let us fix a line $\mathcal{L}\subset P^\perp$ (i.e. a two dimensional subspace of $\mathbb{P}V$ containing $L$). It is not quasi-ideal if the restriction $\mathcal{Q}:=Q|_\mathcal{L}$ of the quadratic form $Q$ is non-degenerate. We will refer to the zero set of $\mathcal{Q}$ as the {\bf points of the line} and denote it by $[[\mathcal{L}]]$.

\begin{theorem}
Suppose that $\charop\mathbb{K}\ne 2$, and let $\widetilde\Gamma$ denote the group of automorphisms of $\mathcal{L}$ preserving $\mathcal{Q}$ and $L\in\mathcal{L}$. There is always a natural index $2$ subgroup $\Gamma$ that acts freely transitively on the non-ideal points $[[\mathcal{L}]]$ of the line. These groups depend only on the isomorphism class of the quadratic form on $L^\perp$.
\end{theorem}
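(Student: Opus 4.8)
The plan is to analyze the line $\mathcal L$ as a $2$-dimensional projective space carrying the non-degenerate quadratic form $\mathcal Q$ together with a distinguished point $L\in\mathcal L$. Since $L\in\mathcal L$ and $L$ is isotropic (it represents a line of the geometry, so $Q(L)=0$, hence $\mathcal Q(L)=0$), the point $L$ is one point of the conic $[[\mathcal Q]]$ on the projective line $\mathcal L$. Concretely, I would fix a representant of $L$ in the underlying $2$-dimensional vector space and pick a complementary isotropic or anisotropic partner so that $\mathcal Q$ takes a normal form; because $\charop{\bb K}\ne2$ every binary quadratic form is diagonalizable, and the presence of the isotropic vector $L$ forces $\mathcal Q$ to be a symplectic (hyperbolic) plane of the form $\mathcal Q\cong[+1,-1]$, i.e.\ $xy$ in suitable coordinates with $L=[e_1]$. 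The whole computation then reduces to understanding $\mathrm{O}(xy)$ on ${\bb P}^1$ fixing $[e_1]$.

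**First**, I would identify $\widetilde\Gamma$ explicitly. In the coordinates where $\mathcal Q(a,b)=ab$, the isometries of $\mathcal Q$ acting on the vector space are generated by the diagonal maps $(a,b)\mapsto(\lambda a,\lambda^{-1}b)$ and the swap $(a,b)\mapsto(b,a)$; projectively, fixing the point $L=[e_1]$ rules out the swap (which sends $[e_1]$ to $[e_2]$), so on ${\bb P}\mathcal L$ the stabilizer of $L$ consists of the diagonal scalings together with a possible reflection fixing both isotropic points. I would then show that $\widetilde\Gamma$ has order-$2$ quotient measured by whether the two isotropic points $[e_1]=L$ and $[e_2]$ are swapped or fixed, and that the index-$2$ subgroup $\Gamma$ fixing each of them individually is precisely the image of the diagonal torus $\{(a,b)\mapsto(\lambda a,\lambda^{-1}b)\}$ acting on ${\bb P}\mathcal L$.

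**Next**, I would verify the free transitive action on non-ideal points. The non-ideal points $[[\mathcal L]]$ are the points of $[[\mathcal Q]]$ other than $L$ (an ideal point being one that is both a point and a line, i.e.\ collinear with and equal, projectively, to the relevant distinguished directions); concretely these are the anisotropic-complement points of the conic distinct from $L$. Parametrizing a point of the conic as $[1:s]$ or similar, the diagonal action $\lambda\cdot[1:s]=[\lambda:\lambda^{-1}s]=[1:\lambda^{-2}s]$ shows the torus acts on this parameter by multiplication, which is free and transitive on the nonzero values of the parameter precisely when the parameter ranges over a ${\bb K}^\times$-torsor — this is where freeness (no nontrivial $\lambda$ fixes a point) and transitivity both fall out, and it is also what makes distance additive, since the group law is multiplication of the $\lambda$-parameters. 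The final sentence, that $\Gamma$ and $\widetilde\Gamma$ depend only on the isomorphism class of $Q$ on $L^\perp$, I would argue by noting that $\mathcal Q$ is determined up to isometry by that restriction together with $Q(L)=0$, and by Corollary (the one following Kitaoka's extension theorem) any isometry of the pair $(\mathcal L,L)$ is realized inside the ambient group, so the abstract group is an invariant of the isometry type.

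**The main obstacle** I anticipate is pinning down exactly which points count as ``non-ideal'' on $\mathcal L$ and confirming that removing them leaves a single ${\bb K}^\times$-torsor rather than two components or a quotient thereof; in particular the index-$2$ phenomenon must be traced carefully to the reflection that fixes both isotropic points but reverses orientation of distance (sending $d$ to $-d$), and one must check this reflection genuinely lies in $\widetilde\Gamma$ but not $\Gamma$, so that the additive distance is well-defined only up to sign on all of $\widetilde\Gamma$ but becomes a genuine ${\bb K}^\times$-valued coordinate under $\Gamma$. Disentangling the ideal point $L$ from a possible second distinguished point, and making sure the free transitivity is stated for the correct point set, is the delicate bookkeeping that the rest of the argument hinges on.
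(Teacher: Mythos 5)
There is a genuine gap: you have misidentified the basic objects the theorem is about. First, the line ${\cal L}$ here is not a projective line carrying a binary quadratic form. By the paper's definitions a line is a subplane of dimension $1$, i.e.\ a two-dimensional \emph{projective} subspace of ${\bb P}V$ containing $L$, so its underlying vector space $U$ is three-dimensional and ${\cal Q}$ is a ternary form; the point set $[[{\cal L}]]$ is a conic in this projective plane. In your setup (${\cal Q}\cong xy$ on a two-dimensional space) the zero set of ${\cal Q}$ consists of exactly two points, so no group resembling ${\bb K}^\times$ could act freely and transitively on it; your parametrization of ``points of the conic'' as $[1:s]$ silently replaces the conic by all of ${\bb P}^1$. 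Second, your opening claim that $L$ is isotropic (``it represents a line of the geometry, so $Q(L)=0$'') confuses the distinguished element $L$ of the tuple $(V,Q,P,L)$ with an oriented hyperplane cycle $l$: a cycle $l$ is a hyperplane when $Q(l)=0$ and $B(L,l)=0$, but $L$ itself has arbitrary norm --- negative in the elliptic model, positive in the hyperbolic one, zero only in the parabolic one. This error is fatal to the theorem's final sentence, which asserts precisely that $\Gamma$ is governed by the quadratic form on $L^\perp$, equivalently by $Q(L)$: over ${\bb R}$ one obtains $SO(2)$, ${\bb R}^+$ or $SO(1,1)$ in the three cases, whereas your assumption $Q(L)=0$ would force the parabolic answer ${\bb K}^+$ always, collapsing the elliptic and hyperbolic cases.

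The index-$2$ mechanism you propose also does not survive scrutiny even within your own setup: the elements of $O(xy)$ that swap the two isotropic points do not fix $[e_1]=L$, so they are not in $\widetilde\Gamma$ at all, and what remains (the image of the diagonal torus) has no natural index-$2$ subgroup of the required kind. The paper's argument lives in the three-dimensional picture: projective automorphisms of ${\cal L}$ fixing $L$ are identified with linear isometries of $U$ fixing a representant vector of $L$; transitivity on the non-ideal points (conic points $p$ with $B(L,p)\ne0$) follows from Witt's extension theorem applied to the isometric planes $\langle L,p\rangle$ and $\langle L,p'\rangle$; the stabilizer of such a $p$ in $\widetilde\Gamma$ is the isometry group $\{\pm1\}$ of the one-dimensional non-degenerate complement $\langle L,p\rangle^\perp$, and $\Gamma$ is cut out by the condition $\det=1$. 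This also shows what ``ideal'' means here: the ideal points of the line are the conic points lying on the polar of $L$ (those with $B(L,p)=0$), not ``the points of $[[{\cal Q}]]$ other than $L$''.
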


\begin{proof}
Consider that all isometries fix $L\in\mathcal{L}$. Take the underlying vector space of $\mathcal{L}$, $U$, and we may identify $L$ with one of its representant vectors in $U$. The automorphisms of the projective space $\mathcal{L}$ that preserve $L\in\mathcal{L}$ can be identified with automorphisms of the vector space $U$ that preserve the vector $L\in U$, since the only degree of freedom is a scalar multiple, which is fixed by fixing the vector $L$.

First we will show that $\widetilde\Gamma$ acts transitively. Consider a pair of isotropic vectors $p$ and $p'$ in $U$ corresponding to non-ideal points, i.e. $B(L,p)\ne 0$ and $B(L,p')\ne 0$. We may assume that $B(L,p)=B(L,p')=1$ by taking an appropriate scalar multiple of $p$ and $p'$. Then the subspaces $\langle L,p\rangle$ and $\langle L,p'\rangle$ are isometric, and there is an isometry that sends one into the other. We may also assume that it fixes $L$, since the space $\langle L,p'\rangle$ is non-degenerate, and $L$ can be sent to any vector of the same norm via an isometry of $\langle L,p'\rangle$.

Now let us fix an isotropic vector $p$ such that $B(L,p)\ne 0$ and consider its stabilizer. Since $\langle L,p\rangle$ is non-degenerate, its orthogonal complement $U':=\langle L,p\rangle^\perp$ is also non-degenerate and of dimension $1$. The isometries of such a space are multiplications by the scalars $\pm1$. These two isometries can be distinguished canonically by their determinant, which is either $1$ or $-1$.

Defining $\Gamma$ as the subgroup $\widetilde\Gamma$ consisting of isometries of determinant $1$ gives a free and transitive action on the non-ideal points of $\mathcal{L}$.

If $Q(L)\ne 0$, the quadratic space $L^\perp$ is non-degenerate, and $\widetilde\Gamma$ is determined by the quadratic form on it. Otherwise $L$ can be embedded in a generalized basis and the quadratic form takes the form $[+1,+1,-1]$ up to scalar. Since all isotropic vectors are in the same orbit of the complete orthogonal group, $\widetilde\Gamma$ is determined.
\end{proof}

\begin{remark}
A similar theorem can be proven for perfect fields with $\charop\mathbb{K}=2$, but it necessitates the introduction of more technical concepts. In particular, we can still introduce an index $2$ subgroup, even though it can not be identified by the determinant.
\end{remark}

\begin{definition}
Let us define the {\bf oriented distance} between two points $p_1$ and $p_2$ of the line $\mathcal{L}$ as the unique element $\gamma\in\Gamma$ that sends $p_1$ to $p_2$. This distance is additive, and swapping $p_1$ with $p_2$ gives $\gamma^{-1}$. The {\bf distance} is then an element of the set $\Gamma/_{\gamma\sim\gamma^{-1}}$. This distance is preserved by all isometries of the conformal geometry.
\end{definition}

$\Gamma$ can then be determined from the quadratic form on $\mathcal{L}$ and the norm of $L$. Note that this also means that the distance between two points is not always in the same group, as it depends on the type of line that connects them.

\begin{example}
Consider the real case. Here, $\Gamma$ is one of $SO(2)\cong \mathbb{T}^1$, $SO(1,1)\cong\mathbb{R}^\times$ or $\mathbb{R}^+$. In the first case, distance is measured by an angle, as is seen in the elliptic case, while in the last one, it is measured additively, as in the Euclidean case. The elements of $SO(1,1)$, used in hyperbolic geometry, are usually written as matrices containing $\cosh(d)$ and $\sinh(d)$, however by replacing the matrix with $\cosh(d)+\sinh(d)=e^{2d}$, we may see that the usual distance may be considered by taking the logarithm $\log\colon\mathbb{R}^\times\to\mathbb{R}^+$.
\end{example}

\begin{example}
Consider a finite field $\mathbb{F}_q$ with $\charop\mathbb{F}_q=p\ne 2$. If ${\bf e}$ is a non-square, the quadratic polynomial $x^2-{\bf e}y^2=0$ has no non-trivial solution, hence there are no isotropic vector for the quadratic form $[1,-{\bf e}]$. The other non-isomorphic form is $[1,-1]$, where the directions $(1,1)$ and $(1,-1)$ are isotropic. These may be considered, by analogy with the real cases, the ellipse and hyperbola over finite fields.
\end{example}

Angles can be defined in a dual way, by exchanging $P$ with $L$.

\section{Cayley-Klein geometries}

Now let us consider the case when the dimension of the geometry is $2$ and the characteristic is not $2$. Recall that the norm of a point in $\mathbb{P}V$ is well defined up to multiplying by a square. To classify the possible $2$-geometries, consider the signature of the quadratic form $Q$ and the norm of $P$ and $L$.

\begin{lemma}
If $\charop\mathbb{K}\ne 2$, the quadratic form of a non-degenerate geometry of dimension $2$ is isomorphic to $[+1,+1,+1,-1,-1]$ up to scalar.
\end{lemma}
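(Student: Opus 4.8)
The plan is to pin down $Q$ by an orthogonal decomposition, reduce it to the shape $[+1,+1,-1,-1,a]$, and then absorb the leftover coefficient $a$ using the freedom to rescale $Q$. The \emph{up to scalar} qualifier will be doing real work, so I want to isolate exactly where it is needed.

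First I would invoke the preceding theorem characterizing non-degenerate geometries: such a geometry contains a symplectic subspace $W$ of dimension $4$. Since $\charop{\bb K}\ne 2$, the definition of symplectic subspace tells us that the restriction of $Q$ to $W$ is $[+1,+1,-1,-1]$. The space $V$ has dimension $n+3=5$, and the associated bilinear form $B$ is non-degenerate by the definition of a universal conformal geometry, so $V$ splits orthogonally as $V=W\oplus W^\perp$ with $W^\perp$ of dimension $1$ and non-degenerate. Hence $Q|_{W^\perp}=[a]$ for some nonzero $a\in{\bb K}$, and collecting the two pieces gives $Q\cong[+1,+1,-1,-1,a]$.

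It then remains to show $[+1,+1,-1,-1,a]\cong[+1,+1,+1,-1,-1]$ up to scalar. I would multiply the target form by $a$, obtaining $a\cdot[+1,+1,+1,-1,-1]=[a,a,a,-a,-a]$, and reorder the diagonal entries as $[a,-a]\oplus[a,-a]\oplus[a]$. Each plane $[a,-a]$ is isotropic (it vanishes on the vector $(1,1)$, since $a-a=0$) and therefore factors as a product of two linear forms, $a(x_1-x_2)(x_1+x_2)$; by the definition of symplectic subspace it is $\cong[+1,-1]$. Consequently $a\cdot[+1,+1,+1,-1,-1]\cong[+1,-1]\oplus[+1,-1]\oplus[a]=[+1,+1,-1,-1,a]\cong Q$, which is exactly the assertion, with scalar $a$.

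The main obstacle is the case in which $a$ is a non-square: there $[+1,+1,-1,-1,a]$ is genuinely not isometric to $[+1,+1,+1,-1,-1]$ over an arbitrary field, so the lemma is false without the \emph{up to scalar} clause. The crux of the argument is the observation that rescaling the whole form by $a$ converts each anisotropic pair $[a,-a]$ into a symplectic (hyperbolic) plane $[+1,-1]$, which is what silently normalizes the stray coefficient. When $a$ is already a square the reduction $[a]\cong[+1]$ is immediate and no rescaling is needed, so the two cases merge into the single computation above.
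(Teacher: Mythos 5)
Your proof is correct and follows essentially the same route as the paper: extract the $4$-dimensional symplectic subspace from the non-degeneracy theorem, split off a $1$-dimensional orthogonal complement $[a]$, and absorb $a$ by rescaling, using the fact that a scaled symplectic plane remains symplectic. The only cosmetic difference is that you rescale the target form $[+1,+1,+1,-1,-1]$ by $a$ while the paper rescales $Q$ by $1/A$; these are the same move.
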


\begin{proof}
If the geometry is non-degenerate, then it has a symplectic subspace of dimension $4$. Hence the quadratic form is $[+1,+1,-1,-1]\oplus[A]$ for some $A\ne 0$. Multiplying the form by $1/A$, and noting that multiplying a symplectic space by a scalar is still symplectic, we get $[+1,+1,-1,-1]\oplus[+1]$.
\end{proof}

Over the reals and finite fields $\mathbb{F}_q$ ($2\nmid q$), multiplying the quadratic form $[+1,+1,+1,-1,-1]$ by a non-square gives a non-isomorphic quadratic form. Hence the pair $(Q(L),Q(P))$, given up to squares, uniquely determines the geometry: for the reals it may be $0$, $+1$ or $-1$, for a finite field and quadratic non-residue ${\bf e}$, it may be $0$, $1$ or ${\bf e}$. This gives us $3\times 3$ possibilities.

Cayley-Klein geometries are classified by the groups of translations along a line, and rotations around a point. As seen earlier, the group of translations along a line $\mathcal{L}$ is determined by the quadratic form restricted to $\mathcal{L}$ and the norm of $L$. A simple result is given here without proof:

\begin{lemma}
Let the dimension of the geometry be $2$, and let $\mathcal{L}$ be a virtual line that is not quasi-ideal, i.e. the quadratic form is non-degenerate restricted to it. If $P$ is anisotropic, $\mathcal{L}$ is given as the dual of some vector $\mathcal{L}^*\in P^\perp$, and the quadratic form on $\mathcal{L}$ only depends on $Q(\mathcal{L}^*)$. If $P$ is isotropic, the isomorphism class of the quadratic form on $\mathcal{L}$ is independent of the choice of $\mathcal{L}$.
\end{lemma}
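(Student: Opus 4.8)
The plan is to treat the two cases, $P$ anisotropic and $P$ isotropic, separately, in each reducing the isometry class of $Q|_{\cal L}$ to data about an orthogonal complement. When $P$ is anisotropic, $Q(P)\ne 0$, so the restriction $Q^P$ to the four-dimensional space $P^\perp$ is non-degenerate and induces the usual duality on ${\bb P}(P^\perp)$. The line ${\cal L}$ is a three-dimensional subspace of $P^\perp$ containing $L$, i.e. a plane in ${\bb P}(P^\perp)$, hence the orthogonal complement within $P^\perp$ of a unique point ${\cal L}^*$; equivalently ${\cal L}=\langle P,{\cal L}^*\rangle^\perp$ in $V$. First I would record that, for the non-degenerate ambient form $Q$, a subspace and its orthogonal complement are non-degenerate simultaneously (their radicals vanish together, using $(W^\perp)^\perp=W$), so ${\cal L}$ fails to be quasi-ideal exactly when $\langle P,{\cal L}^*\rangle$ is non-degenerate. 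Since this binary space carries the diagonal form $[Q(P),Q({\cal L}^*)]$, non-degeneracy amounts to $Q({\cal L}^*)\ne 0$; thus a non-quasi-ideal ${\cal L}$ corresponds to an anisotropic ${\cal L}^*$, and $\langle P,{\cal L}^*\rangle$ is then a binary space with no degenerate vectors, as $\charop{\bb K}\ne 2$.

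Now I would invoke the corollary to Kitaoka's extension theorem: isometric subspaces with no degenerate vectors have isometric orthogonal complements. Hence the isometry class of ${\cal L}=\langle P,{\cal L}^*\rangle^\perp$ is determined by that of $\langle P,{\cal L}^*\rangle\cong[Q(P),Q({\cal L}^*)]$. As $Q(P)$ is fixed, it remains to see that, with $a=Q(P)$ fixed and nonzero, the binary form $[a,b]$ is determined up to isometry by the class of $b$ modulo squares: the determinant $ab\in{\bb K}^\times/({\bb K}^\times)^2$ is an isometry invariant (the forward direction), while rescaling the second basis vector realizes the converse. This yields that $Q|_{\cal L}$ depends only on $Q({\cal L}^*)$.

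For $P$ isotropic, $Q(P)=0$ makes $P$ a degenerate vector of $P^\perp$, and in fact the radical of $Q^P$ on $P^\perp$ is precisely $\langle P\rangle$, so $P^\perp/\langle P\rangle$ carries a single fixed non-degenerate ternary form. The crux is that ${\cal L}$ is non-quasi-ideal exactly when $P\notin{\cal L}$: if $P\in{\cal L}$ then $P$ lies in the radical of $Q|_{\cal L}$, whereas if $P\notin{\cal L}$ the three-dimensional ${\cal L}$ is a complement to $\langle P\rangle$ in $P^\perp$, and the quotient projection restricts to an isometry ${\cal L}\to P^\perp/\langle P\rangle$; here one checks that $Q$ is constant on cosets, using $B(x,P)=0$ for $x\in P^\perp$ together with $Q(P)=0$. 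Consequently every non-quasi-ideal line is isometric to the one form on $P^\perp/\langle P\rangle$, independently of the choice of ${\cal L}$.

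I expect the difficulty to be bookkeeping rather than conceptual: the load-bearing steps are the two identifications of non-quasi-ideality, with anisotropy of ${\cal L}^*$ in the first case and with $P\notin{\cal L}$ in the second, since these are exactly what license the corollary's hypothesis and the radical-complement isometry. One should also take care that the binary-form argument uses the determinant only as a necessary condition, obtaining sufficiency from an explicit rescaling rather than from any completeness of the discriminant as an invariant.
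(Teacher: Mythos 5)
The paper states this lemma explicitly without proof (``A simple result is given here without proof''), so there is no official argument to compare against; judged on its own, your proof is correct and complete. Moreover, it runs on exactly the machinery the paper sets up for this purpose: in the anisotropic case you correctly identify non-quasi-ideality of ${\cal L}=\langle P,{\cal L}^*\rangle^\perp$ with $Q({\cal L}^*)\ne 0$, reduce the binary form $[Q(P),Q({\cal L}^*)]$ up to isometry by rescaling, and invoke the complement-isometry corollary of Kitaoka's extension theorem stated in Section 3; in the isotropic case your identification of the radical of $Q^P$ with $\langle P\rangle$ and of non-quasi-ideality with $P\notin{\cal L}$ makes the quotient projection ${\cal L}\to P^\perp/\langle P\rangle$ an isometry, which gives the independence from the choice of ${\cal L}$.
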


However, when considering only actual lines, i.e. those that are of the form $\langle P,l\rangle^\perp$ for some cycle $l$, these are the dual of $l^P\in P^\perp$ where $l^P$ is the projection of $l$ to $P^\perp$. The quadratic form on the line is then given by $Q(l^P)=-\frac{1}{4}\frac{B(P,l)^2}{Q(P)}$, which in this case is entirely determined by $Q(P)$ up to a square. One of the central results in this article is that in dimension $2$, the translation and rotation groups are determined by the norms of $P$ and $L$, independently of the choice of the line and angle.

\begin{theorem}
Given a two dimensional geometry, there is a unique group of translations $\Gamma_t$ and rotations $\Gamma_r$, depending only on the norm of $L$ and $P$, respectively, such that the distance between any two points and the angle between any two oriented lines is given as an element in these groups up to inverse.
\end{theorem}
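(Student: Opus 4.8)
The plan is to reduce the statement about rotations to the one about translations by invoking the duality that exchanges $P$ and $L$: as noted at the end of the distance subsection, the angle between two oriented lines of the geometry is by definition the distance between the corresponding points of the dual geometry, and the dual geometry is obtained by interchanging $L$ and $P$. Consequently, if I can show that the translation group $\Gamma_t$ along every non--quasi-ideal line is a single group depending only on $Q(L)$, then applying this to the dual geometry (whose distinguished point has norm $Q(P)$) immediately yields that $\Gamma_r$ depends only on $Q(P)$. So the entire content is the translation statement.

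For the translation statement I would fix an actual line $\mathcal{L}=\langle P,l\rangle^\perp$ of the pointspace, with $l$ a non-ideal hyperplane so that $B(P,l)\neq0$; by the earlier lemma on the restriction of $Q$ to $\langle P,c\rangle^\perp$, this is exactly the condition that $\mathcal{Q}:=Q|_{\mathcal{L}}$ be non-degenerate, i.e. that $\mathcal{L}$ be non--quasi-ideal. By the distance theorem of subsection~5.3, the group $\Gamma$ acting freely transitively on the non-ideal points of $\mathcal{L}$ is the determinant-one part of the isometries of the three-dimensional space $\mathcal{L}$ that fix the vector $L$. When $Q(L)\neq0$ we have the orthogonal splitting $\mathcal{L}=\langle L\rangle\perp W$ with $W=L^\perp\cap\mathcal{L}=\langle L,P,l\rangle^\perp$ of dimension two, under which $\Gamma\cong SO(W)$; when $Q(L)=0$ the form $\mathcal{Q}$ is forced into the normal form $[+1,+1,-1]$ up to scalar and $\Gamma$ is the associated unipotent (parabolic) group.

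The heart of the argument is to prove that the isometry class of $W$, hence $\Gamma$, is the same for every such line and is governed solely by $Q(L)$. I would compute the discriminant of $W$ by iterated orthogonal splitting. From $V=\langle P,l\rangle\perp\mathcal{L}$, and the Gram computation $\det\langle P,l\rangle=-B(P,l)^2$, one gets $\det\mathcal{Q}\sim-\det Q$ up to squares, the factor $B(P,l)^2$ being a square. From $\mathcal{L}=\langle L\rangle\perp W$ one then gets $\det W\sim s\cdot\det Q\cdot Q(L)$ up to squares, for a fixed nonzero scalar $s$ independent of the line. Since $\det Q$ is an invariant of the fixed geometry, and since the square class of the whole expression is unchanged under rescaling $Q$ (because $Q(L)$ and $\det Q$ rescale compatibly, producing an even power of the scalar), the discriminant of $W$ lies in one square class, the same for every line and a fixed function of $Q(L)$. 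Over $\mathbb{R}$ the sign of this discriminant distinguishes $SO(2)$ from $SO(1,1)$, and over a finite field it distinguishes the two isometry classes of binary forms; in either case it pins down $W$ and therefore $\Gamma$, so a single group $\Gamma_t$ serves for every pair of points.

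The rotation statement follows verbatim by passing to the dual geometry, as above. The main obstacle I anticipate is the bookkeeping of square classes together with the degenerate case $Q(L)=0$: there the splitting $\mathcal{L}=\langle L\rangle\perp W$ breaks down, so one must instead read the group off directly from the normal form $[+1,+1,-1]$ and check that this parabolic group is genuinely independent of $Q(P)$. Verifying that the discriminant computation is insensitive both to the choice of line (through the cancellation of the $B(P,l)^2$ and, ultimately, of $Q(P)$) and to rescaling $Q$ is the delicate part; once that square-class accounting is in place, the identification $\Gamma\cong SO(W)$ and its dependence only on $Q(L)$ are immediate.
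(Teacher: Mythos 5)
Your skeleton coincides with the paper's: fix a non-ideal cycle $l$, work on the line $\mathcal{L}=\langle P,l\rangle^\perp$, invoke the distance theorem of subsection 5.3 to identify $\Gamma$ with the determinant-one isometries of $\mathcal{L}$ fixing $L$, split $\mathcal{L}=\langle L\rangle\perp W$ when $Q(L)\ne0$ so that $\Gamma\cong SO(W)$, and obtain rotations by exchanging $P$ and $L$ (the paper simply says a similar calculation works for $\Gamma_r$, which is your duality argument). Your discriminant bookkeeping is also correct: $\det\langle P,l\rangle=-B(P,l)^2$ gives $\det\mathcal{Q}\sim-\det Q$ and $\det W\sim-\det Q\cdot Q(L)$ up to squares, invariantly under rescaling of $Q$. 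Where you diverge is the decisive step. The paper notes that $\langle P,l\rangle$ is symplectic, so by Witt cancellation $\mathcal{Q}\cong[+1,+1,-1]$ for \emph{every} non-ideal line; it then reparametrizes the symplectic plane, $[+1,-1]\cong[Q(L),-Q(L)]$, to write $\mathcal{Q}\cong[+1,Q(L),-Q(L)]$, and Witt's extension theorem places $L$ at the middle coordinate, giving $W\cong[+1,-Q(L)]$ as a quadratic space and $\Gamma_t=SO([+1,-Q(L)])$ exactly. You instead pin down only the square class of $\det W$ and then appeal to the classification of binary forms by their discriminant.

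That last appeal is the gap. The theorem and the paper's proof are stated for an arbitrary field of characteristic not $2$, and over such a field the discriminant does not determine a binary form up to isometry: over $\mathbb{Q}$, the forms $[1,1]$ and $[3,3]$ have the same discriminant but are not isometric, since $x^2+y^2$ does not represent $3$. So ``it pins down $W$'' is false outside the two families you name, and your proof as written covers only $\mathbb{R}$ and $\mathbb{F}_q$. The repair is cheap: either apply Witt cancellation once more --- $[+1,+1,-1]\cong[Q(L)]\perp W$ determines $W$ up to isometry, independently of the line --- which is in substance what the paper's rescaling argument accomplishes, or use that $SO$ of a binary form is the norm-one torus of its discriminant algebra and hence depends only on the discriminant. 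Note that the statement ``an element of the group up to inverse'' really requires the various $W$'s to be \emph{isometric} (so the groups are identified via conjugation by isometries, unique up to inversion); a mere abstract isomorphism of groups, which is all a discriminant match would guarantee in general, is not quite enough. Finally, you defer the case $Q(L)=0$: the paper carries out the explicit computation there, exhibiting $\Gamma_t\cong\mathbb{K}^+$ via the translations $T_\tau$, whereas your sketch only secures bare uniqueness; it is adequate for that purpose because $\mathcal{Q}\cong[+1,+1,-1]$ independently of the line and the last paragraph of the proof of the distance theorem (all isotropic vectors lie in a single orbit) then determines $\Gamma$, but you should say so rather than leave it as an anticipated obstacle.
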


\begin{proof}
We will calculate the group $\Gamma_t$ as an orthogonal group, and a similar calculation can be done for $\Gamma_r$.

We may assume that the quadratic form is of the form $[+1,+1,+1,-1,-1]$, which holds up to a scalar. Choose a non-ideal line $l$, i.e. $Q(l)=0$, $B(L,l)=0$, $B(P,l)\ne 0$. The space $\langle P,l\rangle$ is symplectic (hence isomorphic to $[+1,-1]$), and the quadratic form restricted to $\mathcal{L}:=\langle P,l\rangle^\perp$ is of the form $[+1,+1,-1]$.

Now consider that a symplectic space of the form $[+1,-1]$ can be reparametrized as $[+\lambda,-\lambda]$ for some scalar $\lambda$. Assuming that $Q(L)\ne 0$, $[+1,+1,-1]$ is isomorphic to $[+1,Q(L),-Q(L)]$, and $\Gamma_t$ is the special orthogonal group preserving the quadratic form $[+1,-Q(L)]$.

If however, $Q(L)=0$, we may choose a generalized basis $u$, $v$ such that $B(L,u)=0$, $B(L,v)=1$ and $Q(v)=0$. Any isometry that fixes $L$ preserves the subspace $L^\perp$, hence the image of $u$ is in $\langle L,u\rangle$. Since the norm of $u$ must be preserved and $Q(\varepsilon u+\tau L)=\varepsilon^2 Q(u)$, the image of $u$ must be of the form $\pm u+\tau L$. A simple calculation tells us that such an isometry on $\langle L,u\rangle$ extends uniquely to $v$ (by sending it to $v\mp\frac{\tau}{Qu}u-\frac{\tau^2}{Qu}L$), and that the determinant is $1$ if and only if the image of $u$ is chosen as $u+\tau L$. If we denote this map by $T_\tau$, then $T_{\tau_1}\circ T_{\tau_2}=T_{\tau_1+\tau_2}$, hence $\Gamma_t\cong\mathbb{K}^+$.
\end{proof}

\begin{example}
In the real case, the norm of $L$ may be $1$, $0$ or $-1$, giving $\Gamma_t=SO(2)=\mathbb{T}^1$, $\Gamma_t=\mathbb{R}^+$, $\Gamma_t=SO(1,1)=\mathbb{R}^\times$, respectively. When $\Gamma_t=\mathbb{T}^1$ or $\mathbb{R}^\times$, we can create covers by $\mathbb{R}^+$.

For a finite field and quadratic non-residue ${\bf e}$, there are two quadratic forms in $2$ dimensions, and $[1,-{\bf e}]$ is the non-symplectic case. Therefore choosing $L$ to be ${\bf e}$, $0$ and $1$ gives cases analoguous to the real field.
\end{example}

Over the reals, by choosing whether distances are measured along the groups $SO(2)$ if $Q(L)=-1$, $SO(1,1)$ if $Q(L)=1$ and $\mathbb{R}$ if $Q(L)=0$, and analoguously for angles and $P$, the $3\times 3$ possibilities represent the $9$ Cayley-Klein geometries, as seen in \Str{}:
\begin{center}
\begin{tabular}{|c|c|c|c|}
\hline
\diagbox{$Q(P)$}{$Q(L)$}&	$-1$&	$0$&	$1$\\
\hline
$-1$&		elliptic&		parabolic&		hyperbolic\\
\hline
$0$&		dual parabolic&		Laguerre/Galilei&	dual Minkowski\\
\hline
$1$&		dual hyperbolic&		Minkowski&		anti-de Sitter\\
\hline
\end{tabular}
\end{center}

Interestingly, we get a very similar classification for finite fields $\mathbb{F}_q$ of characteristic $p\ne 2$, but the $-1$ has to be replaced with ${\bf e}$.

For the complex field (and in fact any quadratically closed field), the signs of the norms are not relevant anymore, and the elliptic, hyperbolic, dual hyperbolic and anti-de Sitter cases are isometric geometries, and similarly for the parabolic and Minkowski geometries, and also their duals.

Finally, it is worth noting that some of these geometries are cycle equivalent, meaning that they are geometries over the same set of points and unoriented cycles, but with different sets of orientable cycles.

\begin{theorem}
Over the reals, only the dual hyperbolic space and the anti-de Sitter spaces are cycle equivalent, while the Minkowski space has two cycle equivalent, non-isomorphic models.

Over the finite fields of odd characteristic, any pair of geometries with $Q(P)\ne 0$ are cycle equivalent if $Q(L)\ne 0$, and if $Q(L)=0$, it has two cycle equivalent, non-isomorphic models.
\end{theorem}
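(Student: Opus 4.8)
The plan is to read cycle equivalence off the pointspace and then to count oriented geometries lying over a fixed pointspace. By definition two geometries are cycle equivalent exactly when the tuples $(P^\perp,Q^P,L)$ are isomorphic, and since rescaling $Q$ leaves the geometry unchanged, the operative relation on pointspaces is similarity: isomorphism of the marked form $(Q^P,L)$ up to a scalar multiple. A geometry is recovered from its pointspace by prescribing the one further datum $Q(P)$ and setting $Q=Q^P\oplus[Q(P)]$, so the first step is to record which values of $Q(P)$ keep the global form in the class $[+1,+1,+1,-1,-1]$ up to scalar furnished by Lemma~6.1. Over ${\bb R}$ this is a signature constraint and it is decisive: a neutral $Q^P$ of signature $(2,2)$ admits both signs of $Q(P)$ (both completions are $(3,2)$ up to sign), a Lorentzian $(3,1)$ admits only one, and a degenerate $Q^P$ (the case $Q(P)=0$) is rigid. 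Over ${\bb F}_q$ there is no such obstruction: both square classes of $Q(P)$ always give an admissible global form.

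The heart of the argument is a single lemma on when two admissible choices $Q(P)=a$ and $Q(P)=a'$ give the same geometry. Any isomorphism fixing the lines $\langle P\rangle$ and $\langle L\rangle$ restricts on $P^\perp$ to a similarity $\psi$ of $Q^P$ whose ratio $\mu$ is forced by the relation $\lambda^2 a=\mu a'$ on the $P$-line; as $a$ and $a'$ differ by a non-square ($-1$ over ${\bb R}$, ${\bf e}$ over ${\bb F}_q$), the ratio $\mu$ lies in the non-square class. I would prove three things: (i) a similarity of this ratio exists — automatically over ${\bb F}_q$ because in even dimension ${\bf e}Q^P\cong Q^P$, and over ${\bb R}$ exactly when $Q^P$ is neutral, since a negative scaling turns $(3,1)$ into $(1,3)$; (ii) such a $\psi$ can be adjusted by an isometry to fix $\langle L\rangle$ iff $L$ is isotropic, because the isotropic vectors form a single orbit, while fixing an anisotropic $L$ would require $c^2=\mu$ with $\mu$ a non-square; and (iii) because $\mu$ is a non-square any such $\psi$ is improper, so it yields an isomorphism of unoriented geometries but reverses the orientation of cycles.

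Assembling these, the two admissible extensions over a given pointspace are always cycle equivalent, and they are the ``two models'' of the statement; the lemma dictates how they compare. If $L$ is anisotropic there is no fixing similarity, so the two extensions are non-isomorphic even as unoriented geometries; over ${\bb R}$ the only pointspace admitting two extensions is the neutral one, and its anisotropic case gives the single cycle equivalent pair, identified by the norms of $P$ and $L$ as the dual hyperbolic and anti-de Sitter geometries (the Lorentzian and degenerate pointspaces are rigid and so have no partners, whence these are the only cycle equivalent geometries over ${\bb R}$). If $L$ is isotropic an improper fixing similarity exists, so the two extensions coincide as unoriented geometries but become inequivalent once cycles are oriented; over ${\bb R}$ this neutral isotropic case is the Minkowski plane, whose two oriented models are precisely the two signs of $Q(P)$ in the explicit description in {\bf 4.2}. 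Over ${\bb F}_q$ both extensions are always admissible, so every geometry with $Q(P)\ne0$ acquires a partner: for $Q(L)\ne0$ ($L$ anisotropic) the two models are non-isomorphic and cycle equivalent, and for $Q(L)=0$ ($L$ isotropic) they are the two oriented models of a single unoriented geometry, matching the stated dichotomy. The hard part will be the lemma — establishing the existence and the proper/improper parity of a similarity of prescribed non-square ratio fixing a prescribed line — together with the bookkeeping that over ${\bb R}$ only the neutral signature escapes rigidity; the final matching of norms to the named geometries is then routine.
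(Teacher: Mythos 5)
Your proposal reaches the stated conclusions and is built on the same skeleton as the paper's proof: cycle equivalence is read off the pointspace, the candidate geometries over a fixed pointspace are enumerated by the square class of $Q(P)$, and the case $Q(P)=0$ is dismissed as rigid. Where you genuinely diverge is in how the isomorphism question for the two extensions is settled. The paper does this by pure invariant computation: it writes the second extension as $[+1,+1,p_2,-p_1,-1]$, forces it into the class $[+1,+1,+1,-1,-1]$ up to scalar (by signature over $\mathbb{R}$, by determinant over $\mathbb{F}_q$), and reads off that after rescaling the partner has norms $(Q(P),-Q(L))$, respectively $(Q(P),\mathbf{e}Q(L))$; comparing norm pairs then identifies the dual hyperbolic/anti-de Sitter pair and the self-paired Minkowski case. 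Your route replaces this with a structural lemma: the two extensions are isomorphic exactly when $Q^P$ admits a similarity of non-square ratio fixing $\langle L\rangle$, which by Witt transitivity on isotropic vectors happens exactly when $L$ is isotropic. That is a genuinely different, and arguably more conceptual, explanation of the dichotomy between two non-isomorphic geometries ($Q(L)\neq 0$) and two models of one geometry ($Q(L)=0$); what it does not deliver by itself is the explicit identity of the partner geometry, which the paper's rescaling computation gives for free and which you recover only by appealing back to the classification.

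One step of yours is wrong as stated, though harmlessly so: in (iii) you claim that any similarity of non-square ratio is improper. It need not be: on $Q^P=[+1,+1,-1,-1]$ the coordinate swap $(x_1,x_2,x_3,x_4)\mapsto(x_3,x_4,x_1,x_2)$ is a similarity of ratio $-1$ fixing the isotropic vector $(1,0,1,0)$, and its determinant is $+1$. No parity argument is needed, however. Your own ratio computation already shows that every isomorphism between the two models scales $Q^P$ by a non-square, hence is never an isometry of the pointspace; and since the actual cycles of a model project to vectors of norm $-\tfrac{1}{4}B(P,l)^2/Q(P)$, i.e. lying in the square class of $-Q(P)$, the two models select cycles whose norms lie in different square classes, which no pointspace isometry can interchange. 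Substituting this observation for (iii) completes the Minkowski claim (and the $Q(L)=0$ claim over $\mathbb{F}_q$), and in fact makes that point more explicit than the paper's own one-line treatment of it.
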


\begin{proof}
Two spaces are cycle equivalent if $Q|_{P^\perp}$ is equal for the two. Hence to get from one space to another, it suffices to extend $Q|_{P^\perp}$ differently to the whole space. If $Q(P)=0$, this can be done in a single manner up to isometry, hence it does not result in a different geometry. Therefore we may assume $Q(P)\ne 0$ and that $\langle P,P^\perp\rangle$ generate the entire space.
The quadratic form is isomorphic to $[+1,+1,+1,-1,-1]$ up to scalar. By denoting $p_1:=Q(P)$, $Q|_{P^\perp}$ is isomorphic to $[+1,+1,-p_1,-1]$, and we may extend it with a new norm $Q'(P):=p_2$, giving $[+1,+1,p_2,-p_1,-1]$. In order to get a non-degenerate geometry, this must be isomorphic to the original up to a scalar.

Over the reals, the signature must be either $(3,2)$ or $(2,3)$, hence $p_1=1$, $p_2=-1$ up to a square, and the scalar that it must be multiplied with is $-1$ in order to get an isomorphic quadratic form. Hence in the new geometry, the quadratic form is $-Q'$, giving $-Q'(P)=-p_2=1=Q(P)$ and $-Q'(L)=-Q(L)$, preserving the sign of $Q(P)$ but changing that of $Q(L)$.

Over the finite fields of odd characteristic, the determinant of the quadratic form determines the isomorphism class. Since $\det[+1,+1,+1,-1,-1]=1$ and $\det[+1,+1,p_2,-p_1,-1]=p_1p_2$, the two forms are isomorphic if multiplied with the scalar $p_1p_2$. Take a non-square ${\bf e}$ from the field, and considering $p_1\ne p_2$, we may assume $p_1p_2={\bf e}$ up to a square. Hence the new quadratic form is ${\bf e}Q'$, giving ${\bf e}Q'(P)={\bf e}p_2$, which is equal to $p_1$ up to a square, and ${\bf e}Q'(L)={\bf e}Q(L)$, preserving $Q(P)$ but exchanging $Q(L)$ with ${\bf e}Q(L)$.

Finally note that there is no non-square scalar that preserves the form $[+1,+1,+1,-1,-1]$, hence if $p_1=p_2$, we do not get non-isomorphic models of the geometry.
\end{proof}

\section{Further research}

After the introduction of distance and angle, we can create a generalized form of trigonometry over all spaces. The details of this, however, would need more technical definitions, and should probably form a separate work.

All the definitions may be applied to the case of characteristic $2$ as well, however as usual, there are certain obstacles to developing an analogous theory for distances. The introduction of the necessary technical tools goes over the scope of this article, and will be treated separately.

\section*{Acknowledgements}

Acknowledgements are due to Andr\'as Hrask\'o who introduced me to Lie sphere geometry, and provided me with important references.

Finally, I would like to thank Professor Emil Moln\'ar for his comments and for providing some essential references,
and the Budapest University of Technology and Economics for their support.

\section*{Conflict of interest statement}

On behalf of all authors, the corresponding author states that there is no conflict of interest.

\end{document}